   \edef\Gin@extensions{\Gin@extensions,.mps}
\tikzset{
  column sep/.code=\def\pgfmatrixcolumnsep{\pgf@matrix@xscale*(#1)},
  row sep/.code   =\def\pgfmatrixrowsep{\pgf@matrix@yscale*(#1)},
  matrix xscale/.code=%
    \pgfmathsetmacro\pgf@matrix@xscale{\pgf@matrix@xscale*(#1)},
  matrix yscale/.code=%
    \pgfmathsetmacro\pgf@matrix@yscale{\pgf@matrix@yscale*(#1)},
  matrix scale/.style={/tikz/matrix xscale={#1},/tikz/matrix yscale={#1}}}
\def\pgf@matrix@xscale{1}
\def\pgf@matrix@yscale{1}
\newtheorem{theorem}{Theorem}
\newtheorem{corollary}{Corollary}
\newtheorem{proposition}{Proposition}[section]
\newtheorem{claim}{Claim}[section]
\newtheorem*{claim*}{Claim}
\newtheorem{property}{Property}[section]
\newtheorem*{theorem*}{Theorem}
\newtheorem*{corollary*}{Corollary}
\theoremstyle{definition}
\newtheorem{definition}{Definition}[section]
\newtheorem{example}{Example}[section]
\newtheorem{notation}{Notation}[section]
\algnewcommand\algorithmicinput{\textbf{Input:}}
\algnewcommand\INPUT{\item[\algorithmicinput]}
\algnewcommand\algorithmicoutput{\textbf{Output:}}
\algnewcommand\OUTPUT{\item[\algorithmicoutput]}
\algnewcommand\algorithmicproc{\textbf{Procedure:}}
\algnewcommand\PROCEDURE{\item[\algorithmicproc]}
\algnewcommand\algorithmiccomplexity{\textbf{Complexity:}}
\algnewcommand\COMPLEXITY{\item[\algorithmiccomplexity]}
\newlength{\continueindent}
\newcommand*{\ALG@customparshape}{\parshape 2 \leftmargin \linewidth \dimexpr\ALG@tlm+\continueindent\relax \dimexpr\linewidth+\leftmargin-\ALG@tlm-\continueindent\relax}
\apptocmd{\ALG@beginblock}{\ALG@customparshape}{}{\errmessage{failed to patch}}
\theoremstyle{remark}
\newtheorem{remark}{Remark}
\theoremstyle{observation}
\definecolor{DarkBlue}{rgb}{0,0.1,0.55}
\numberwithin{equation}{section}
\newcommand {\hide}[1]{}
 \newcommand {\sign} {\mbox{\bf sign}}
\newcommand {\junk}[1]{}
\newcommand {\R} {\mathrm{R}}
\newcommand {\D}     {\mbox{\rm D}}
\newcommand {\C}     {\mathrm{C}}
\newcommand {\Z}  {\mathbb{Z}}
 \newcommand {\N}         {\mathbb{N}}
\newcommand {\kk}         {\mathbf{k}}
\newcommand {\RR} {{\mathcal R}}
\newcommand {\la}   {{\langle}}
\newcommand {\ra}   {{\rangle}}
\newcommand {\eps} {{\varepsilon}}
\newcommand {\E} {{\rm ext}}
\newcommand {\Sign}      {\mbox{\rm Sign}}
\newcommand {\PP}     {\mathbb{P}} 
\newcommand{\card}{\mathrm{card}}
\def\addots{\mathinner{\mkern1mu
\raise1pt\vbox{\kern7pt\hbox{.}}
\mkern2mu\raise4pt\hbox{.}\mkern2mu
\raise7pt\hbox{.}\mkern1mu}}
\newcommand{\HH}  {\mbox{\rm H}}
\newcommand{\Hom}{\mathrm{Hom}}
\newcommand{\x}{\mathbf{x}}
\newcommand{\y}{\mathbf{y}}
\newcommand{\Cc}{\mathrm{Cc}}
\newcommand{\Pairs}{\mathrm{pairs}}
\newcommand{\Nerve}{\mathcal{N}}
\newcommand{\Simp}{\mathbf{Simp}}
\newcommand{\Vect}{\mathrm{Vect}}
\newcommand{\Cov}{\mathrm{Cov}}
\newcommand{\SAcat}{\mathbf{SA}}
\newcommand{\sk}{\mathrm{sk}}
\newcommand{\Elim}{\mathrm{Elim}}
\newcommand{\BElim}{\mathrm{BElim}}
\newcommand{\Trunc}{\mathrm{Trunc}}
\begin{document}
\title[Complexity and speed of semi-algebraic multi-persistence]
{
Complexity and speed of semi-algebraic multi-persistence
}
\author{Arindam Banerjee}
\address{Department of Mathematics, 
IIT Kharagpur, Kharagpur, India.}
\email{123.arindam@gmail.com}

\author{Saugata Basu}
\address{Department of Mathematics,
Purdue University, West Lafayette, IN 47906, U.S.A.}
\email{sbasu@math.purdue.edu}

\subjclass{Primary 14F25, 55N31; Secondary 68W30}
\date{\textbf{\today}}
\keywords{persistent homology, multi-persistence, poset module, semi-algebraic set, constructible function, speed}
\thanks{
Banerjee was partially supported by SERB Start Up Research Grant, IIT Kharagpur Faculty Start Up Research Grant and CPDA of IIT Kharagpur.
Basu was  partially supported by NSF grant
CCF-1910441. }
\begin{abstract}
Let $\R$ be a real closed field, $S \subset \R^n$ a closed and bounded semi-algebraic set, and $\mathbf{f}=(f_1,\ldots,f_p):S \rightarrow \R^p$ a continuous semi-algebraic map inducing a $p$-parameter semi-algebraic filtration by sublevel sets. 
We introduce a barcode invariant for such filtrations that directly extends the classical ($p=1$) barcode. 
After 
scaling of the parameter space, in each homological degree $\ell$ the invariant is encoded by a $\Z_{\ge 0}$-valued
function
\[
\mu_\ell(S,\mathbf{f}):\ 
\Big(({-}1,1)^p\times(({-}1,1)^p \cup\{(1,\ldots,1)\}) \Big)\ \cap\ \{(\mathbf a,\mathbf b)\mid \mathbf a\preceq \mathbf b\}
\ \longrightarrow\ \Z_{\ge 0},
\]
where $\preceq$ denotes the product order on $\R^p$.
We prove that $\mu_\ell(S,\mathbf{f})$ is semi-algebraically constructible and establish a singly exponential upper bound on its description complexity. 
Moreover, we give a singly exponential-time algorithm to compute $\mu_\ell(S,\mathbf{f})$, extending to arbitrary $p$ the corresponding result for $p=1$ in \cite{Basu-Karisani-2}. 
Finally, for semi-algebraic filtrations of bounded description complexity we bound the number of equivalence classes of finite poset modules realizable in this way, yielding a tight analogue of ``speed'' bounds for algebraically defined graph classes.
\end{abstract}
\maketitle

\tableofcontents

\section{Introduction}
Persistent homology theory is now a well established sub-field of applied topology \cite{Weinberger_survey,Ghrist,dey2022computational}. Initially persistent homology modules were associated to filtrations of spaces by the sub-level sets of single functions. More recently, simultaneous filtrations by multiple functions have been studied driven by applications \cite{CZ2007,CSZ2010,BOO2022,DX2022,DKM2024}. 
More generally, persistent homology theory over arbitrary posets have been developed
extensively in \cite{miller2020homological}.
Study of persistent homology restricted to tame spaces -- such 
as the category of semi-algebraic sets and maps -- is of much recent origin \cite{KS2018,Miller2023,Basu-Karisani-2} -- and is the topic of this paper.
While barcodes of filtrations of finite simplicial complexes are standard  objects in persistent homology theory, their definition in the case of continuous filtrations (for example, in the semi-algebraic case) require care. A precise definition 
of semi-algebraic barcodes for semi-algebraic filtrations appears in 
\cite{Basu-Karisani-2}, where an algorithm with singly exponential complexity is also given for computing it. 

\medskip
In this paper we study, from a quantitative and algorithmic viewpoint, persistent homology modules arising from multi-parameter filtrations of semi-algebraic sets defined by several continuous semi-algebraic functions. 
We introduce a multi-parameter generalization of barcodes for semi-algebraic filtrations and give an explicit (symbolic) algorithm 
to compute it, together with singly exponential complexity bounds\footnote{see Section~\ref{subsec:complexity:algorithms} for the model of computation and definition of complexity}; this recovers fully the results proved in \cite{Basu-Karisani-2} in the one-parameter case. 
More precisely, for each fixed number of parameters $p$
and fixed homological degree $\ell$,
our results show that the barcode function of a semi-algebraic multi-filtration can be computed with \emph{singly exponential} complexity (rather than the doubly exponential bounds obtained via general triangulation methods). This adds multi-parameter persistent homology in fixed degree to the growing list of semi-algebraic algorithmic tasks admitting singly exponential algorithms — such as computing the dimension and Euler–Poincar\'e characteristic, computing Betti numbers in fixed degrees, and performing quantifier elimination with a fixed number of quantifier alternations.

Our results can be viewed as quantitative and algorithmic counterparts to more abstract treatments of multi-parameter persistence in tame settings --  notably those of Kashiwara--Schapira \cite{KS2018}, Curry--Patel \cite{Curry-Patel}, Miller \cite{Miller2023} and Berkouk \cite{Berkouk2023} -- thus establishing a bridge between these abstract works and algorithmic real algebraic geometry (as developed for example in the book \cite{BPRbook2}). 

\medskip
We also define a notion of \emph{speed} for families of semi-algebraically defined poset modules and, using our quantitative bounds, prove a sharp upper bound on this speed that parallels the classical graph-theoretic notion studied in \cite{Sauermann}. To our knowledge this establishes for the first time a connection between persistent homology theory and extremal graph theory. It also parallels several other examples where algorithmic results in real algebraic geometry lead to non-trivial upper bounds on some quantity of 
interest (see Remark~\ref{rem:byproduct} later).
\medskip

We now define semi-algebraic filtrations and barcodes more precisely.
Let $\R$ be a real closed field, fixed for the rest of the paper. 
We study semi-algebraic multi-filtrations defined by sublevel sets of continuous semi-algebraic maps
$\mathbf f:S\to \R^p$ with $p\ge 1$. 
We first extend to this multi-parameter setting the notion of semi-algebraic barcodes (in each homological degree $\ell\ge 0$) introduced in \cite{Basu-Karisani-2} for the one-parameter case $p=1$. 
We then give, for each fixed $\ell$, an algorithm with singly exponential complexity for computing these invariants. 
In our framework the degree-$\ell$ barcode is encoded by a $\Z_{\ge 0}$-valued semi-algebraically constructible function whose domain is a certain semi-algebraic subset of $\R^p\times \R^p$ (see Definition~\ref{def:sa-mp-barcode-mu}).

\medskip
In order to motivate the definition of barcodes in the multi-parameter case
we first recall below the definition of semi-algebraic barcodes in the one parameter case from \cite{Basu-Karisani-2}.

\subsection{One-parameter case}
\label{subsec:intro:one-parameter}

We fix a field $\kk$ and let $\Vect_{\kk}$ denote the category of $\kk$-vector spaces and $\kk$-linear maps.
For each $\ell\ge 0$ we write
\[
\HH_\ell(\cdot)=\HH_\ell(\cdot;\kk)
\]
for the $\ell$-th homology functor from the category of closed and bounded semi-algebraic sets (and continuous semi-algebraic maps) to $\Vect_{\kk}$
(see \cite[Chapter~6]{BPRbook2} for the definition over an arbitrary real closed field).

\medskip
Now let $S\subset \R^n$ be a closed and bounded semi-algebraic set and let $f:S\to\R$ be a continuous semi-algebraic map.
Then $f(S)$ is closed and bounded. 

\medskip
We recall the definition of the semi-algebraic barcode from \cite{Basu-Karisani-2}, with a minor normalization that will be convenient in the multi-parameter setting.
Instead of letting the parameter range be all of $\R$, we work over $(-1,1]$ and interpret the endpoint $1$ as the analogue of ``$+\infty$'':
in the case, $f(S) \subset (-1,1)$, 
\footnote{
This can always be achieved after rescaling the parameter (i.e.\ replacing $f$ by a suitable scalar multiple),
and this entails no loss of topological information.} 
the
semi-infinite bars in the usual barcode correspond bijectively to bars ending at $1$ in this normalized version, so 
no information is lost in this case. 

\medskip
The advantage of this convention will become clear when we define multi-parameter barcodes, where a direct analogue of ``semi-infinite'' bars requires additional care.

\medskip
Following \cite{Basu-Karisani-2}, we denote by
\[
\mathcal F \;=\; \bigl(S_{f\le t}\bigr)_{t\in(-1,1]}
\qquad\text{where}\qquad
S_{f\le t}=\{x\in S \mid f(x)\le t\},
\]
the filtration of $S$ by sublevel sets of $f$.
For $s\le t$ and $\ell\ge 0$, let
\[
\iota_\ell^{s,t}:\HH_\ell(S_{f\le s}) \longrightarrow \HH_\ell(S_{f\le t})
\]
be the homomorphism induced by the inclusion $S_{f\le s}\hookrightarrow S_{f\le t}$.
The family $\bigl(\iota_\ell^{s,t}\bigr)_{s\le t}$ defines a functor
\[
\mathbf P_{S,f,\ell}:\bigl((-1,1],\le\bigr)\longrightarrow \Vect_{\kk},
\qquad
\mathbf P_{S,f,\ell}(t)=\HH_\ell(S_{f\le t}),
\]
and we set
\[
\HH_\ell^{s,t}(\mathcal F)\;=\;\mathrm{Im}(\iota_\ell^{s,t}).
\]

\medskip
We next define certain subspaces of $\mathbf P_{S,f,\ell}(s)=\HH_\ell(S_{f\le s})$.

\begin{definition}[Subspaces associated to the filtration $\mathcal F$]
\label{def:barcode}
Let $\ell\ge 0$ and let $s\le t$ with $s,t\in(-1,1)$.
Define
\begin{align*}
M^{s,t}_\ell(\mathcal F)
&=\sum_{-1\le s'<s}\bigl(\iota_\ell^{s,t}\bigr)^{-1}\!\bigl(\HH_\ell^{s',t}(\mathcal F)\bigr),\\
N^{s,t}_\ell(\mathcal F)
&=\sum_{-1\le s'<s\le t'<t}\bigl(\iota_\ell^{s,t'}\bigr)^{-1}\!\bigl(\HH_\ell^{s',t'}(\mathcal F)\bigr).
\end{align*}
For $s\in(-1,1)$ we also set
\[
M^{s,1}_\ell(\mathcal F)\;=\;\sum_{s\le t'\le 1} M^{s,t'}_\ell(\mathcal F).
\]
\end{definition}

For $s\le t$ with $s,t\in(-1,1)$ and $\ell\ge 0$, define the quotient space
\[
Q^{s,t}_\ell(\mathcal F)\;=\;M^{s,t}_\ell(\mathcal F)\big/ N^{s,t}_\ell(\mathcal F),
\]
and for $s\in(-1,1)$ define
\[
Q^{s,1}_\ell(\mathcal F)\;=\;\HH_\ell(S_{f\le s})\big/ M^{s,1}_\ell(\mathcal F).
\]
Finally, for $s\in(-1,1)$ and $t\in(-1,1]$ with $s\le t$, set
\[
\mu^{s,t}_\ell(\mathcal F)\;=\;\dim_{\kk} Q^{s,t}_\ell(\mathcal F).
\]
In persistent homology, $\mu^{s,t}_\ell(\mathcal F)$ is interpreted as the multiplicity of degree-$\ell$ classes born at time $s$ and dying at time $t$ (with $t=1$ corresponding to classes that never die).

\medskip
We thus obtain a function
\begin{equation}
\label{eqn:barcode-sa-mu}
\mu_\ell(S,f):
\bigl(({-}1,1)\times({-}1,1]\bigr)\cap\{(s,t)\mid s\le t\}\longrightarrow \Z_{\ge 0},
\qquad
\mu_\ell(S,f)(s,t)=\mu^{s,t}_\ell(\mathcal F),
\end{equation}
which we call the \emph{degree-$\ell$ barcode} of the filtration $(S,f)$.

\begin{remark}
\label{rem:barcode-sa-mu}
The function $\mu_\ell(S,f)$ is often referred to as the \emph{degree-$\ell$ persistence diagram} of $(S,f)$.
We retain the terminology \emph{barcode} to remain consistent with \cite{Basu-Karisani-2}.
\end{remark}

It is shown in \cite{Basu-Karisani-2} that the function $\mu_\ell(S,f)$ is semi-algebraically constructible (see Definition~\ref{def:sa-constr-functions}); moreover, it has finite support of singly exponential size, and \emph{loc.\ cit.}\ gives an algorithm with singly exponential complexity for computing it.
Our goal in this paper is to extend these constructibility and effectivity results to the multi-parameter setting.

\subsection{Multi-parameter case}
\label{subsec:intro:mp}
In order to generalize the notion of barcodes to the multi-parameter
semi-algebraic setting it is convenient to first introduce some categorical language.

\begin{definition}[Poset modules]
\label{def:poset-module}
    Given a partially ordered set  (poset) $(\PP,\preceq)$, a
\emph{poset module over $\PP$} is a functor:
\[
\mathbf{P}: (\PP,\preceq) \rightarrow \Vect_{\mathbf{k}},
\]
where  we consider $(\PP,\preceq)$ as a poset category -- namely, the category whose objects are elements $p \in \PP$, and the set of morphisms $\Hom_{(\PP,\preceq )}(p,p')$ is empty if $p \not\preceq p'$ and a singleton set, 
which will denote by ``$p\preceq p'$'', otherwise.

We will denote by $\Pairs(\PP) = \{(p,p') \in \PP \times \PP \; \mid \; p \preceq p'\}$. 
\end{definition}

\begin{definition}[Isomorphism of poset modules]
\label{def:poset-module-isomorphism}
We say that two poset modules $\mathbf{P},\mathbf{P'}:\PP \rightarrow \Vect_{\mathbf{k}}$ are isomorphic if there exists a natural transformations $F,G$ as depicted below which are inverses of each other:
\[
\xymatrix{
\PP \ar@/^1pc/[rr]^{\mathbf{P}} \ar@/_1pc/[rr] _{\mathbf{P}'}& F \Uparrow  & \Vect_{\mathbf{k}},
}
\xymatrix{
\PP \ar@/^1pc/[rr]^{\mathbf{P}} \ar@/_1pc/[rr] _{\mathbf{P}'}& \Downarrow G & \Vect_{\mathbf{k}}.
}
\]
\end{definition}

We now generalize the barcode function $\mu_\ell$ from the one-parameter semi-algebraic setting to two broader contexts: first to arbitrary poset modules, and then to semi-algebraic multi-filtrations.

\medskip
The poset relevant for multi-parameter persistence is $(\R^p,\preceq)$ with the coordinatewise partial order. 
As in the classical one-parameter case ($p=1$), it is important to record homology classes that never die (corresponding to ``semi-infinite'' bars). 
To encode these classes while working over a bounded parameter space, we adopt a normalization analogous to restricting to $(-1,1]$ in the one-parameter case: we replace $\R^p$ by the bounded poset
\[
\PP_p := (-1,1)^p \cup \{(1,\ldots,1)\},
\]
ordered by the restriction of the product order, and interpret the distinguished maximal element $(1,\ldots,1)$ as the multi-parameter analogue of ``$+\infty$''.
\footnote{As in the case when $p=1$, when $\mathbf{f}(S) \subset (-1,1)^p$ no topological information will be lost by restricting the poset to $\PP_p$.
}
(When $p=1$ this recovers the convention of working on $(-1,1]$, with $1$ playing the role of the endpoint at infinity.)

\medskip
This leads to the following notation.

\begin{notation}
For a poset $(\PP,\preceq)$, let $\PP^{\max}\subset \PP$ be the set of maximal elements, and set $\PP^{o}=\PP\setminus \PP^{\max}$.
\end{notation}

\medskip
We next extend Definition~\ref{def:barcode} to an arbitrary poset module, replacing the linearly ordered poset $((-1,1],\le)$ by a general poset $(\PP,\preceq)$.

\begin{definition}[Barcode of a poset module]
\label{def:persistent}
Let $\mathbf P:(\PP,\preceq)\to \Vect_{\kk}$ be a functor.
For $(\mathbf s,\mathbf t)\in \Pairs(\PP)$, set
\[
\HH^{\mathbf s,\mathbf t}(\mathbf P)=\mathrm{Im}\bigl(\mathbf P(\mathbf s\preceq \mathbf t)\bigr)\subset \mathbf P(\mathbf t).
\]
For $(\mathbf s,\mathbf t)\in \Pairs(\PP^{o})$ define
\begin{align*}
M^{\mathbf s,\mathbf t}(\mathbf P)
&=\sum_{\mathbf s'\prec \mathbf s}\bigl(\mathbf P(\mathbf s\preceq \mathbf t)\bigr)^{-1}\!\bigl(\HH^{\mathbf s',\mathbf t}(\mathbf P)\bigr),\\
N^{\mathbf s,\mathbf t}(\mathbf P)
&=\sum_{\mathbf s'\prec \mathbf s\preceq \mathbf t'\prec \mathbf t}\bigl(\mathbf P(\mathbf s\preceq \mathbf t')\bigr)^{-1}\!\bigl(\HH^{\mathbf s',\mathbf t'}(\mathbf P)\bigr),\\
Q^{\mathbf s,\mathbf t}(\mathbf P)
&=M^{\mathbf s,\mathbf t}(\mathbf P)\big/ N^{\mathbf s,\mathbf t}(\mathbf P).
\end{align*}
For $(\mathbf s,\mathbf t)\in (\PP^{o}\times \PP^{\max})\cap \Pairs(\PP)$ set
\[
M^{\mathbf s,\mathbf t}(\mathbf P)=\sum_{\mathbf s\preceq \mathbf t'\preceq \mathbf t} M^{\mathbf s,\mathbf t'}(\mathbf P),
\qquad
Q^{\mathbf s,\mathbf t}(\mathbf P)=\mathbf P(\mathbf s)\big/ M^{\mathbf s,\mathbf t}(\mathbf P).
\]
For $(\mathbf s,\mathbf t)\in (\PP^{o}\times \PP)\cap \Pairs(\PP)$ define
\begin{equation}
\label{eqn:def:barcode:multiplicity}
\mu^{\mathbf s,\mathbf t}(\mathbf P)=\dim Q^{\mathbf s,\mathbf t}(\mathbf P).
\end{equation}
We call the function
\[
\mu(\mathbf P):(\PP^{o}\times \PP)\cap \Pairs(\PP)\to \Z_{\ge 0},
\qquad
(\mathbf s,\mathbf t)\mapsto \mu^{\mathbf s,\mathbf t}(\mathbf P),
\]
the \emph{barcode} of $\mathbf P$.
\end{definition}

\medskip
We now specialize to poset modules arising from semi-algebraic multi-filtrations.

\begin{definition}[Multi-persistence modules induced by a semi-algebraic multi-filtration]
\label{def:semi-algebraic-mp-poset-module}
Let $S\subset \R^n$ be a closed and bounded semi-algebraic set and let $\mathbf f:S\to \R^p$ be a continuous semi-algebraic map.
For $\mathbf y\in \R^p$ set
\[
S_{\mathbf f\preceq \mathbf y}=\{x\in S \mid \mathbf f(x)\preceq \mathbf y\}.
\]
For $\ell\ge 0$ we define the functor
\[
\mathbf P_{S,\mathbf f,\ell}:(\R^p,\preceq)\to \Vect_{\kk},
\]
called the \emph{$\ell$-th multi-persistence module of $(S,\mathbf f)$}, by
\[
\mathbf P_{S,\mathbf f,\ell}(\mathbf y)=\HH_\ell(S_{\mathbf f\preceq \mathbf y}),
\qquad
\mathbf P_{S,\mathbf f,\ell}(\mathbf y\preceq \mathbf y')=\iota^{\mathbf y,\mathbf y'}_\ell,
\]
where $\iota^{\mathbf y,\mathbf y'}_\ell$ is induced by the inclusion
$S_{\mathbf f\preceq \mathbf y}\hookrightarrow S_{\mathbf f\preceq \mathbf y'}$.
\end{definition}

\medskip
To implement the normalization discussed above, we introduce the bounded parameter poset:
\begin{notation}
\label{not:PP}
Let $(\PP_p,\preceq)$ denote the poset
\[
\PP_p=\bigl((-1,1)^p \cup \{(1,\ldots,1)\}\bigr),
\]
equipped with the restriction of the product order on $\R^p$.
\end{notation}
Note that $\PP_p^{\max}=\{(1,\ldots,1)\}$ and $\PP_p^{o}=(-1,1)^p$.

\begin{definition}[Barcode of a semi-algebraic multi-filtration]
\label{def:sa-mp-barcode-mu}
For $\ell\ge 0$ we define
\[
\mu_\ell(S,\mathbf f)
\;=\;
\mu\!\left(\mathbf P_{S,\mathbf f,\ell}\!\restriction_{\PP_p}\right)
:\; (\PP_p^{o}\times \PP_p)\cap \Pairs(\PP_p)\longrightarrow \Z_{\ge 0},
\]
and call $\mu_\ell(S,\mathbf f)$ the \emph{degree-$\ell$ barcode} of the multi-filtration $(S,\mathbf f)$.
\end{definition}

\begin{remark}
Definition~\ref{def:sa-mp-barcode-mu} reduces to the one-parameter definition \eqref{eqn:barcode-sa-mu} when $p=1$ (under the normalization convention identifying $1$ with $+\infty$).
\end{remark}

The key algorithmic result of the paper can be summarized informally as follows; precise statements (together with the relevant complexity notions for constructible functions and for our algorithmic model) appear later as Theorems~\ref{thm:main:mu} and \ref{thm:alg:main:mu}.

\begin{theorem*}[Informal statement of the main algorithmic result]
\label{thm:informal:main:mu}
Fix $\ell\ge 0$ and $p\ge 1$. Let $S$ be a closed and bounded semi-algebraic set and let $\mathbf f:S\to \R^p$ be a continuous semi-algebraic map. Then:
\begin{enumerate}[(a)]
\item the barcode function $\mu_\ell(S,\mathbf f)$ is semi-algebraically constructible, with singly exponential description complexity;
\item there is an algorithm with singly exponential complexity that computes (a semi-algebraic description of) $\mu_\ell(S,\mathbf f)$.
\end{enumerate}
\end{theorem*}

\begin{remark}
\label{rem:finite}
In Definition~\ref{def:persistent}, the subspaces $M^{\mathbf s,\mathbf t}(\mathbf P)$ and $N^{\mathbf s,\mathbf t}(\mathbf P)$ are defined as sums indexed by (potentially) infinitely many parameters $\mathbf s',\mathbf t'$.
For general poset modules this may indeed involve infinitely many distinct subspaces. 
However, for poset modules arising from semi-algebraic filtrations, only finitely many distinct subspaces occur, and this finiteness is a crucial input in the proof of Theorem~\ref{thm:informal:main:mu}.
It ultimately follows from the semi-algebraic constructibility of the underlying multi-persistence modules $\mathbf P_{S,\mathbf f,\ell}$ and their normalized restrictions $\mathbf P_{S,\mathbf f,\ell}\!\restriction_{\PP_p}$.
We record the latter fact informally here; a precise version (after defining constructibility and its complexity for poset modules) appears as Theorem~\ref{thm:main}.
\end{remark}

\begin{theorem*}[Informal statement on constructibility of the induced poset modules]
\label{thm:informal:main}
The poset modules $\mathbf P_{S,\mathbf f,\ell}$ and $\mathbf P_{S,\mathbf f,\ell}\!\restriction_{\PP_p}$ are semi-algebraically constructible with singly exponential complexity. Moreover, each can be constructed by an algorithm with singly exponential complexity.
\end{theorem*}

The following example illustrates the functions $\mu_\ell(S,\mathbf f)$ in a simple case.

\begin{example}
Let $S\subset \R^2$ be the circle of radius $1/2$ centered at $\mathbf 0$, and let $\mathbf f=(X_1,X_2)$ be the coordinate map. Then $\mathbf f(S)\subset (-1,1)^2$.
One checks that $\mu_0(S,\mathbf f)$ and $\mu_1(S,\mathbf f)$ (see Figure~\ref{fig:mu0-mu1-support}) are given by:
\[
\mu_0(S,\mathbf f)(\mathbf s,\mathbf t)=
\begin{cases}
1 & \text{if $\mathbf s\in \R_{\le 0}^2$, $\|\mathbf s\|^2=1/4$, and $\mathbf t=(1,1)$,}\\
0 & \text{otherwise,}
\end{cases}
\]
\[
\mu_1(S,\mathbf f)(\mathbf s,\mathbf t)=
\begin{cases}
1 & \text{if $\mathbf s=(1/2,1/2)$ and $\mathbf t=(1,1)$,}\\
0 & \text{otherwise.}
\end{cases}
\]
\end{example}

\begin{figure}[t]
\centering
\begin{tikzpicture}[scale=4,>=latex]

\begin{scope}
  \draw[->] (-0.75,0) -- (0.75,0) node[below right] {$s_1$};
  \draw[->] (0,-0.75) -- (0,0.75) node[above left] {$s_2$};

  \fill[black!4] (-0.75,0) rectangle (0,-0.75);

  \draw[thick] (-0.5,0) arc (180:270:0.5);

  \fill (-0.5,0) circle (0.012);
  \fill (0,-0.5) circle (0.012);

  \node[align=center] at (0,-0.97)
    {$\mathrm{supp}\,\mu_0(\,\cdot\,,(1,1))$\\[1pt]
     $\{\|s\|=\tfrac12,\ s_1\le 0,\ s_2\le 0\}$};

  \node[font=\small] at (0,1.08) {$\mu_0(S,\mathbf f)$};
\end{scope}

\begin{scope}[xshift=1.0cm]
  \draw[->] (-0.1,0) -- (1.05,0) node[below right] {$s_1$};
  \draw[->] (0,-0.1) -- (0,1.05) node[above left] {$s_2$};

  \fill (0.5,0.5) circle (0.02);
  \draw[dashed] (0.5,0) -- (0.5,0.5);
  \draw[dashed] (0,0.5) -- (0.5,0.5);

  \node[anchor=west] at (0.53,0.52) {$(\tfrac12,\tfrac12)$};

  \node[align=center] at (0.55,-0.20)
    {\vspace{.1in} $\mathrm{supp}\,\mu_1(\,\cdot\,,(1,1))=\{(\tfrac12,\tfrac12)\}$};

  \node[font=\small] at (0.52,1.08) {$\mu_1(S,\mathbf f)$};
\end{scope}

\end{tikzpicture}
\vspace{.2in}
\caption{Supports of $\mu_0(S,\mathbf f)$ and $\mu_1(S,\mathbf f)$ in the $\mathbf s$-plane (the value is $1$ only when $\mathbf t=(1,1)$).}
\label{fig:mu0-mu1-support}
\end{figure}

In the next two subsections we make precise the notions of complexity used throughout the paper: for semi-algebraically constructible functions and poset modules, and for the algorithms that we employ. We also relate these definitions to similar (but not identical) notions appearing in earlier work.

\subsection{Semi-algebraically constructible functions and poset modules, and their complexity}
\label{subsec:complexity:functions}

\begin{definition}
\label{def:sa-constr-functions}
Let $S$ be a semi-algebraic set. A function
\[
F:S\to \kk
\]
is \emph{semi-algebraically constructible} if it is a $\kk$-linear combination of characteristic functions of finitely many semi-algebraic subsets of $S$.
More generally, a map $F=(f_1,\ldots,f_M):S\to \kk^M$ is semi-algebraically constructible if each coordinate function $f_i$ is.
\end{definition}

Equivalently, for every semi-algebraically constructible $F:S\to\kk^M$ there exists a finite partition of $S$ into semi-algebraic subsets on each of which $F$ is constant. We say that such a partition is \emph{subordinate to $F$}. Any refinement of a subordinate partition is again subordinate to $F$.

\medskip
We will work with a particularly convenient class of semi-algebraic partitions defined via sign conditions.

\begin{notation}[Sign conditions, realizations, and $\Cc(\cdot)$]
\label{not:sign-condition}
Let $\mathcal P\subset \R[X_1,\ldots,X_n]$ be finite. A \emph{sign condition on $\mathcal P$} is a map $\sigma\in\{0,1,-1\}^{\mathcal P}$.
Its realization is
\[
\RR(\sigma)=\{\x\in \R^n \mid \sign(P(\x))=\sigma(P)\ \text{for all }P\in \mathcal P\}.
\]
We denote by
\[
\Sign(\mathcal P)=\{\sigma\in \{0,1,-1\}^{\mathcal P}\mid \RR(\sigma)\neq \emptyset\}
\]
the set of \emph{realizable} sign conditions of $\mathcal P$.
For $\sigma\in \Sign(\mathcal P)$, let $\Cc(\sigma)$ be the set of semi-algebraically connected components of $\RR(\sigma)$, and set
\[
\Cc(\mathcal P)=\bigcup_{\sigma\in \Sign(\mathcal P)} \Cc(\sigma).
\]
Thus $\bigl(C\bigr)_{C\in \Cc(\mathcal P)}$ is a finite partition of $\R^n$ into nonempty, locally closed semi-algebraic sets.
\end{notation}

\begin{notation}
For a finite $\mathcal P\subset \R[X_1,\ldots,X_n]$, we write
\[
\deg(\mathcal P)=\max_{P\in \mathcal P}\deg(P).
\]
\end{notation}

\medskip
We now define the complexity of a semi-algebraically constructible function in terms of the complexity of a sign-condition partition subordinate to it.

\begin{definition}[Complexity of semi-algebraically constructible functions]
\label{def:constructible-function-complexity}
Let $F:S\to \kk^M$ be semi-algebraically constructible, with $S\subset \R^n$ semi-algebraic.
We say that $F$ has \emph{complexity $\le D$} if there exists a finite set $\mathcal P\subset \R[X_1,\ldots,X_n]$ such that
\begin{enumerate}[(i)]
\item the partition $\bigl(C\bigr)_{C\in \Cc(\mathcal P),\, C\subset S}$ of $S$ is subordinate to $F$, and
\item $\card(\mathcal P)\cdot \deg(\mathcal P)\le D$.
\end{enumerate}
\end{definition}

\begin{remark}[Alternative complexity conventions]
\label{rem:alt-complexity}
One could alternatively measure complexity by requiring only that the coarser partition by realizations
\[
\bigl(\RR(\sigma)\bigr)_{\sigma\in \Sign(\mathcal P),\, \RR(\sigma)\subset S}
\]
be subordinate to $F$, with the same bound $\card(\mathcal P)\cdot \deg(\mathcal P)\le D$.
The two conventions are polynomially related: if $\card(\mathcal P)\le s$ and $\deg(\mathcal P)\le d$, then every semi-algebraically connected component of a $\mathcal P$-semi-algebraic set is a $\mathcal Q$-semi-algebraic set for some finite $\mathcal Q\subset \R[X_1,\ldots,X_n]$ with
\[
\card(\mathcal Q)\le s^{n} d^{O(n^4)}
\qquad\text{and}\qquad
\deg(\mathcal Q)\le d^{O(n^3)}
\]
(see \cite[Theorem~16.11]{BPRbook2}).
Consequently, switching between these conventions changes our bounds by at most a factor of $D^{\,n^{O(1)}}$, and therefore does not affect the asymptotic singly exponential estimates proved in this paper (e.g.\ Theorem~\ref{thm:main}).

\medskip
Semi-algebraically constructible functions have also been studied from the viewpoint of computational complexity in \cite{Basu2015}. There, complexity is defined (in terms of formula size) via subordinate partitions as well \cite[Definition~2.16]{Basu2015}, but in a more refined way aimed at developing complexity classes of constructible functions (and sheaves). Since this is not our goal here, we do not reproduce that definition; we note, however, that our upper bounds (for instance in Theorem~\ref{thm:main}) remain valid under the convention of \cite{Basu2015}.
\end{remark}

\medskip
We will use the following notation.

\begin{notation}
\label{not:matrix}
For $0\le p,q\le N$, let
\[
\Trunc_{p,q}^{N}:\kk^{N\times N}\to \kk^{p\times q}
\]
denote the truncation map that extracts the upper-left $p\times q$ block of an $N\times N$ matrix.
For $M\in \kk^{p\times q}$, let $L_M:\kk^{q}\to \kk^{p}$ denote the linear map $\x\mapsto M\x$.
\end{notation}

\begin{definition}[Semi-algebraically constructible poset modules and their complexity]
\label{def:constructible-poset-module}
Let $\mathbf P:(\R^p,\preceq)\to \Vect_{\kk}$ (resp.\ $\mathbf P:(\PP_p,\preceq)\to \Vect_{\kk}$) be a poset module.
We say that $\mathbf P$ is \emph{semi-algebraically constructible} if there exist $N\ge 0$ and a semi-algebraically constructible function
\[
F:\Pairs(\R^p)\to \kk^{N\times N}
\qquad\text{(resp.\ $F:\Pairs(\PP_p)\to \kk^{N\times N}$)}
\]
such that $\mathbf P$ is isomorphic (see Definition~\ref{def:poset-module-isomorphism}) to the poset module
$\widetilde{\mathbf P}:(\R^p,\preceq)\to \Vect_{\kk}$ (resp.\ $\widetilde{\mathbf P}:(\PP_p,\preceq)\to \Vect_{\kk}$) defined by
\begin{align*}
\widetilde{\mathbf P}(\y) &= \kk^{\dim \mathbf P(\y)},\\
\widetilde{\mathbf P}(\y\preceq \y') &=
L_{\Trunc^{N}_{\dim \mathbf P(\y'),\,\dim \mathbf P(\y)}\bigl(F(\y,\y')\bigr)}.
\end{align*}
We call such an $F$ an \emph{associated constructible function} for $\mathbf P$.
We say that $\mathbf P$ has \emph{complexity $\le D$} if it admits an associated constructible function $F$ of complexity $\le D$ in the sense of Definition~\ref{def:constructible-function-complexity}.
\end{definition}

The following simple example illustrates Definitions~\ref{def:constructible-poset-module} and \ref{def:semi-algebraic-mp-poset-module}.
\begin{example}

\begin{figure}[t]
\centering
\begin{tikzpicture}[scale=1.75,>=latex]

\begin{scope}
  \draw[->] (-1.6,0) -- (1.6,0) node[below right] {$y_1$};
  \draw[->] (0,-1.6) -- (0,1.6) node[above left] {$y_2$};

  \fill[black!15] (0,0) circle (1);
  \draw[very thick] (0,0) circle (1);
  \node[font=\small] at (-0.55,0.55) {$S$};

  \draw[thin] (-1,-1.5) -- (-1,1.5);
  \draw[thin] (-1,0) -- (1.5,0);
  \draw[thin] (0,-1) -- (0,1.5);
  \draw[thin] (0,-1) -- (1.5,-1);


  \foreach \x in {-1,1} { \draw (\x,0.03)--(\x,-0.03) node[below] {\scriptsize $\x$}; }
  \foreach \y in {-1,1} { \draw (0.03,\y)--(-0.03,\y) node[left] {\scriptsize $\y$}; }
\end{scope}

\begin{scope}[xshift=3.6cm]
  \draw[->] (-1.6,0) -- (1.6,0) node[below right] {$y_1$};
  \draw[->] (0,-1.6) -- (0,1.6) node[above left] {$y_2$};

  \begin{scope}
    \clip (-1.5,-1.5) rectangle (1.5,1.5);
    \fill[black!18] (-1,0) rectangle (1.5,1.5);  
    \fill[black!18] (0,-1) rectangle (1.5,1.5);  
    \fill[black!18] (0,0) circle (1);            
  \end{scope}

  \draw[thick] (0,0) circle (1);
  \draw[thick] (-1,-1.5) -- (-1,1.5);
  \draw[thick] (-1,0) -- (1.5,0);
  \draw[thick] (0,-1) -- (0,1.5);
  \draw[thick] (0,-1) -- (1.5,-1);

  \draw[->,very thick] (-0.6,-0.2) -- (0.8,0.6);
  \node[anchor=west,font=\scriptsize] at (0.85,0.62) {$\mathbf y_2$};
  \node[anchor=west,font=\scriptsize] at (-0.75,-0.3) {$\mathbf y_1$};

  \node[align=left] at (0.07,-1.25) {\small
  $F(\mathbf y_1,\mathbf y_2)=1$ iff\\
  $\mathbf y_1,\mathbf y_2\in D$ and $\mathbf y_1\preceq \mathbf y_2$.};


\end{scope}

\end{tikzpicture}
\caption{Left: the unit disk $S$ (shaded). Right: the set $D=S\cup(\R_{\ge0}^2+(-1,0))\cup(\R_{\ge0}^2+(0,-1))$ (uniformly shaded), and $F$ is the indicator of $(D\times D)\cap \Pairs(\R^2)$.}
\label{fig:example}
\end{figure}


    Let $n=2,p = 2,\ell = 0$, and $S \subset \R^2$ be the closed unit disk defined by $X_1^2 + X_2^2 -1 \leq 0$, and 
    $f_1 = X_1,f_2 = X_2$. 
    Let $D =  S \cup (\R_{\geq 0}^2 + (-1,0))  \cup (\R_{\geq 0}^2 + (0,-1))$ (see Figure~\ref{fig:example}), 
and let $F: \Pairs(\R^2) \rightarrow \mathbf{k}$ be the constructible function defined as follows:
\begin{eqnarray*}
    F(\y_1,\y_2) &=& 1, \mbox{ if $(\y_1,\y_2) \in D \times D \cap \Pairs(\R^2)$,}\\
    &=& 0, \mbox{else}.
\end{eqnarray*}
It is easy to check that $F$ is associated to the poset module 
$\mathbf{P}_{S,\mathbf{f},0}$ in this example.
Moreover, 
the partition $(C)_{C\in \Cc(\mathcal{D}), C \subset \Pairs(\R^2)}$ is subordinate to $F$,
where 
\[
\mathcal{D} = \{ Y_1^2 + Y_2^2 -1, Y_1+1, Y_2+1, 
Y_1'^2 + Y_2'^2 -1, Y_1'+1, Y_2' +1, Y_1 - Y_1',Y_2 - Y_2'\}.
\]

It follows from Definitions~\ref{def:constructible-function-complexity}
, \ref{def:constructible-poset-module} and \ref{def:semi-algebraic-mp-poset-module}, 
that
the complexity of $F$, and hence also the complexity of $\mathbf{P}_{S,\mathbf{f},0}$, is
$\leq \card(\mathcal{D})\cdot \deg(\mathcal{D}) = 8 \cdot 2  = 16$.
\end{example}

\subsection{Model of computation and complexity of algorithms}
\label{subsec:complexity:algorithms}
We fix an ordered domain $\D\subset \R$ for the rest of the paper.
There are several possible computational models for algorithmic problems involving semi-algebraic sets (and several corresponding notions of ``algorithm'').
When $\R=\mathbb R$ and $\D=\mathbb Z$, one may work in the classical Turing model and measure bit-complexity.
Here we follow \cite{BPRbook2} and adopt a model that is meaningful over an arbitrary real closed field.
In the special case $\D=\mathbb Z$, our bounds imply corresponding bit-complexity estimates. The precise notion of complexity that we use is given below.

\begin{definition}[Complexity of algorithms]
\label{def:complexity}
Our algorithms take as input quantifier-free first-order formulas whose terms are polynomials with coefficients in an ordered domain $\D\subset \R$.
By the \emph{complexity} of an algorithm we mean the number of arithmetic operations and comparisons performed in $\D$.
When $\D=\R$, this coincides with the Blum--Shub--Smale notion of real-number complexity \cite{BSS}.
\footnote{When $\D=\Z$, one can bound the bit-complexity in terms of the bit-sizes of the input coefficients: it is polynomial in the input bit-size times the operation-count bound stated in the paper. We do not state bit-complexity separately.}
\end{definition}

\subsection{Some useful notation}
\begin{notation}[Realizations, $\mathcal P$- and $\mathcal P$-closed semi-algebraic sets]
For a finite set $\mathcal P\subset \R[X_1,\ldots,X_n]$, we call a quantifier-free first-order formula $\phi$ whose atoms are of the form
$P=0$, $P<0$, or $P>0$ with $P\in \mathcal P$ a \emph{$\mathcal P$-formula}.
For any semi-algebraic set $V\subset \R^n$, the realization of $\phi$ in $V$ is
\[
\RR(\phi,V)=\{\x\in V \mid \phi(\x)\},
\]
and we call $\RR(\phi,V)$ a \emph{$\mathcal P$-semi-algebraic subset of $V$}.
If $V=\R^n$ we write $\RR(\phi)=\RR(\phi,\R^n)$ and call $\RR(\phi)$ a $\mathcal P$-semi-algebraic set.

\medskip
We say that a quantifier-free formula $\phi$ is \emph{closed} if it is in disjunctive normal form, contains no negations, and has atoms of the form $P\ge 0$ or $P\le 0$.
If the set of polynomials appearing in a closed formula is contained in a finite set $\mathcal P$, we call $\phi$ a \emph{$\mathcal P$-closed formula}, and call $\RR(\phi,V)$ a \emph{$\mathcal P$-closed semi-algebraic subset of $V$}.

\medskip
Similarly, for $\mathcal Q\subset \R[X_1,\ldots,X_n,Y_1,\ldots,Y_p]$, we call a map $f:\R^n\to \R^p$ \emph{$\mathcal Q$-semi-algebraic} (resp.\ \emph{$\mathcal Q$-closed semi-algebraic}) if $\mathrm{graph}(f)\subset \R^n\times \R^p$ is a $\mathcal Q$-semi-algebraic (resp.\ $\mathcal Q$-closed semi-algebraic) set.
\end{notation}

\subsection{Quantitative results}
\label{subsec:main:quantitative}

\begin{theorem}[Constructibility of the induced multi-persistence modules]
\label{thm:main}
Let $S\subset \R^n$ be a bounded $\mathcal P$-closed semi-algebraic set, and let $\mathbf f:S\to \R^p$ be a $\mathcal Q$-closed continuous semi-algebraic map.
Set
\[
s=\card(\mathcal P)+\card(\mathcal Q),
\qquad
d=\max\bigl(\deg(\mathcal P),\deg(\mathcal Q)\bigr).
\]
Then, for every $\ell\ge 0$, the poset modules $\mathbf P_{S,\mathbf f,\ell}$ and $\mathbf P_{S,\mathbf f,\ell}\!\restriction_{\PP_p}$ are semi-algebraically constructible (in the sense of Definition~\ref{def:constructible-poset-module}) with complexity bounded by
\[
(s d)^{n^{O(\ell)}p^{O(1)}}.
\]
\end{theorem}

\begin{theorem}[Constructibility of the multi-parameter barcode function]
\label{thm:main:mu}
With notation as in Theorem~\ref{thm:main}, for every $\ell\ge 0$ the barcode function $\mu_\ell(S,\mathbf f)$ is semi-algebraically constructible (Definition~\ref{def:constructible-function-complexity}) with complexity bounded by
\[
(s d)^{n^{O(\ell)}p^{O(1)}}.
\]
\end{theorem}

\subsection{Algorithmic result}
\label{subsec:main:alg}

We now state the main algorithmic result. For each fixed $\ell\ge 0$, we give an algorithm with singly exponential complexity that computes the constructible function $\mu_\ell(S,\mathbf f)$ from semi-algebraic descriptions of $S$ and $\mathbf f$.

\begin{theorem}[Algorithm for computing barcode of a semi-algebraic multi-filtration]
\label{thm:alg:main:mu}
There exists an algorithm that takes as input:
\begin{enumerate}[(a)]
\item finite sets of polynomials
$\mathcal P\subset \D[X_1,\ldots,X_n]$ and
$\mathcal Q\subset \D[X_1,\ldots,X_n,Y_1,\ldots,Y_p]$;
\item a $\mathcal P$-closed formula $\Phi$ with $\RR(\Phi)=S \subset \R^n$;
\item a $\mathcal Q$-closed formula $\Psi$ with $\RR(\Psi)= \mathrm{graph}(\mathbf f)\subset \R^{n+p}$, where $\mathbf f:S\to \R^p$ is continuous and $\mathcal Q$-closed;
\end{enumerate}
and produces as output:
\begin{enumerate}[(a)]
    \item a finite set $\mathcal{D}' \subset \D[Y_1,\ldots,Y_p, Y_1',\ldots,Y_p']$, 
containing the polynomials $Y_i \pm 1,Y_i'\pm 1, Y_i - Y_i', 1 \leq i \leq p$; 
\item
for each 
\[
D\in \Cc(\mathcal D')
\quad\text{with}\quad
D\subset (\PP_p^{o}\times \PP_p)\cap \Pairs(\PP_p),
\] 
an integer
$\mu_\ell(D) \in \Z_{\geq 0}$.
\end{enumerate}

Each pair $(D,\mu_\ell(D))$ 
satisfies the property that for all 
$(\mathbf s,\mathbf t) \in D$
\[
\mu_\ell(S,\mathbf{f})(\mathbf s,\mathbf t) = 
\mu_\ell(D).
\]

Let $s=\card(\mathcal P)+\card(\mathcal Q)$ and $d=\max\bigl(\deg(\mathcal P),\deg(\mathcal Q)\bigr)$.
The complexity of the algorithm (Definition~\ref{def:complexity}), as well as the number of polynomials in $\mathcal D'$ and their degrees, are bounded by
\[
(s d)^{n^{O(\ell)}p^{O(1)}}.
\]
\end{theorem}

\subsection{Bounding speeds of semi-algebraic multi-persistence modules}
\label{subsec:main:speed}
Our next result concerns a notion of \emph{speed} for semi-algebraically defined families of multi-persistence modules that we introduce in this paper. 
To motivate it, we first recall the analogous and well-studied notion for semi-algebraic graph classes.

\subsubsection{Speed of semi-algebraic graphs}
Let $F\subset \R^p\times \R^p$ be a fixed semi-algebraic relation. 
A labelled (directed) graph $(V,E\subset V\times V)$ with vertex set $V=[N]=\{1,\ldots,N\}$ is called an \emph{$F$-graph} if there exist points
$\y_1,\ldots,\y_N\in \R^p$ such that
\[
(i,j)\in E \quad\Longleftrightarrow\quad (\y_i,\y_j)\in F.
\]
Families of $F$-graphs (with $F$ semi-algebraic) have been studied from many perspectives, including Ramsey-type problems and extremal questions (see, e.g., \cite{Alon-Pach-et-al,Fox-et-al}). 
Here we focus on their \emph{speed}.

\begin{definition}[Speed of an $F$-graph class]
For a semi-algebraic relation $F\subset \R^p\times \R^p$, let
\[
G_F(N)=\#\{\text{$F$-graphs on the labelled vertex set $[N]$}\}.
\]
We call $G_F:\N\to \N$ the \emph{speed} of the family of graphs defined by $F$.
\end{definition}

The following theorem (stated here in a slightly simplified form) is a key result in the theory of semi-algebraic graphs.

\begin{theorem*}[\cite{Sauermann}, Theorem~1.2]
For every semi-algebraic relation $F\subset \R^p\times \R^p$,
\[
G_F(N)\le N^{(1+o_F(1))\,pN},
\]
where the term $o_F(1)$ depends on $F$.
\footnote{Theorem~1.2 of \cite{Sauermann} is more general: it bounds the number of edge-labellings taking values in a fixed finite alphabet $\Lambda$. The statement above is the special case $\Lambda=\{0,1\}$.}
\end{theorem*}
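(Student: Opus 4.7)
The plan is to reduce the enumeration of $F$-graphs on $N$ labelled vertices to counting the realizable sign conditions of a single explicit family of polynomials in $pN$ variables, and then to invoke the classical Oleinik--Petrovsky--Milnor--Thom--Warren upper bound. First I would fix a finite set $\mathcal{P}_F = \{P_1,\ldots,P_s\} \subset \R[Y_1,\ldots,Y_p,Y_1',\ldots,Y_p']$ of polynomials of degree at most $d$ together with a Boolean formula $\phi$ in atoms of the form ``$P_k \bowtie 0$'' ($\bowtie \in \{<,=,>\}$) such that $F = \RR(\phi)$; the integers $s$ and $d$ depend only on $F$. For each pair $(i,j) \in \{1,\ldots,N\}^2$ and each $k \in \{1,\ldots,s\}$, let $P_k^{(i,j)}$ denote the polynomial in the $pN$ variables $Z_\ell^{(r)}$ ($1 \leq \ell \leq p$, $1 \leq r \leq N$) obtained from $P_k$ by substituting $Z_\ell^{(i)}$ for $Y_\ell$ and $Z_\ell^{(j)}$ for $Y_\ell'$. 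Collect these into a family
\[
\mathcal{Q}_N \subset \R[Z_1^{(1)},\ldots,Z_p^{(N)}]
\]
with $\card(\mathcal{Q}_N) \leq s N^{2}$ and $\deg(\mathcal{Q}_N) \leq d$. By construction, the adjacency matrix of the $F$-graph associated with a configuration $(\y_1,\ldots,\y_N) \in (\R^p)^N$ is read off from the sign condition realized at $(\y_1,\ldots,\y_N)$ by $\mathcal{Q}_N$, and hence $G_F(N) \leq \card(\Sign(\mathcal{Q}_N))$.

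Next I would apply the standard quantitative bound on the number of realizable sign conditions: for any family of $m$ polynomials in $n$ variables of degree at most $d$ with $m \geq n$, one has $\card(\Sign) \leq (C\, m\, d / n)^{n}$ for an absolute constant $C$ (see, for instance, \cite[Chapter 7]{BPRbook2}). Plugging in $m = s N^{2}$, $n = pN$, and the fixed degree $d$ yields
\[
G_F(N) \;\leq\; \Bigl(\tfrac{C s d N}{p}\Bigr)^{pN} \;=\; N^{pN} \cdot A_F^{pN}, \qquad A_F = \tfrac{C s d}{p}.
\]
Since $A_F$ depends only on $F$, one has $A_F^{pN} = N^{pN \cdot (\log A_F / \log N)} = N^{o_F(1)\, pN}$ as $N \to \infty$. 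Combining the two factors gives the asserted estimate $G_F(N) \leq N^{(1 + o_F(1))\, pN}$, which is what the theorem claims.

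The main technical input is the sign-condition bound, which is a classical tool in real algebraic geometry and can be quoted without modification; the rest of the argument is a routine translation in which the combinatorial data of an edge relation on $N$ points of $\R^p$ is encoded as the sign vector of a polynomial family on $\R^{pN}$, the ambient exponent $pN$ arising directly from the dimension of this configuration space. The only mildly delicate point is making the asymptotic statement clean: the constant $A_F$ may be either larger or smaller than $1$, but in all cases $\log A_F / \log N \to 0$ uniformly in the (fixed) data $s, d, p$ attached to $F$, so it is correctly absorbed into the $o_F(1)$ factor after one takes logarithms.
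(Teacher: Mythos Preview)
The paper does not give its own proof of this statement: it is quoted verbatim as \cite[Theorem 1.2]{Sauermann} and used only as motivation for the notion of speed. So there is no ``paper's proof'' to compare against.

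That said, your argument is correct and is precisely the standard route to this bound: encode an $N$-point configuration in $\R^p$ as a point of $\R^{pN}$, pull back the defining polynomials of $F$ along all $N^2$ coordinate-pair projections to get $O_F(N^2)$ polynomials of bounded degree in $pN$ variables, observe that the $F$-graph is determined by the realized sign condition, and finish with the Oleinik--Petrovsky--Milnor--Thom bound. This is exactly the mechanism the paper itself deploys in its proof of Theorem~\ref{thm:speed} (there with the family $\widetilde{\mathcal D}$ and Theorem~\ref{thm:OPTM}), so your write-up is in complete harmony with the paper's own techniques for the analogous persistence result. One cosmetic point: the bound you quote in the form $(Cmd/n)^n$ is the Warren-type estimate and is perfectly adequate here; the paper states the slightly sharper binomial-sum version (Theorem~\ref{thm:OPTM}), but asymptotically they coincide and your conclusion is unaffected.
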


There is a compelling analogy between semi-algebraically defined graph classes and semi-algebraically defined families of multi-persistence modules. 
This naturally leads to the question whether an analogue of Sauermann's speed bound holds for families of multi-persistence modules arising from semi-algebraic filtrations. 
We answer this question affirmatively.

\medskip
To formulate the speed question for poset modules, we first specify the relevant notion of equivalence.

\begin{definition}[Strong and weak equivalence of poset modules]
\label{def:poset:equivalence}
Let
\[
\mathbf P':(P',\preceq')\to \Vect_{\kk},
\qquad
\mathbf P'':(P'',\preceq'')\to \Vect_{\kk}
\]
be poset modules.
We say that $\mathbf P'$ and $\mathbf P''$ are \emph{strongly equivalent} if $(P',\preceq')=(P'',\preceq'')$ and $\mathbf P'\cong \mathbf P''$ as functors (see Definition~\ref{def:poset-module-isomorphism}).

We say that $\mathbf P'$ and $\mathbf P''$ are \emph{weakly equivalent} if there exists a poset isomorphism
\[
\phi:(P',\preceq')\to (P'',\preceq'')
\]
such that $\mathbf P'$ is strongly equivalent to $\mathbf P''\circ \phi$.
Clearly, strong equivalence implies weak equivalence, but not conversely.
\end{definition}

\begin{remark}
In this section our statements do not involve barcodes, and therefore we do not need to record ``infinite bars''. Accordingly, we work with the full poset $(\R^p,\preceq)$ rather than the normalized poset $\PP_p$.
\end{remark}

\medskip
Given a semi-algebraic multi-filtration, we will restrict it to finite parameter sets.

\begin{definition}[Finite restrictions of a multi-persistence module]
\label{def:poset-module-finite}
Let $S$ be a closed and bounded semi-algebraic set and let $\mathbf f:S\to \R^p$ be a continuous semi-algebraic map. Fix $\ell\ge 0$.
For a finite tuple $T=(\mathbf t_1,\ldots,\mathbf t_N)\in (\R^p)^N$, let $\preceq_T$ be the partial order on $[N]$ defined by
\[
j\preceq_T j' \quad\Longleftrightarrow\quad \mathbf t_j\preceq \mathbf t_{j'}.
\]
We denote by
\[
\mathbf P_{S,\mathbf f,T,\ell}:\bigl([N],\preceq_T\bigr)\to \Vect_{\kk}
\]
the resulting poset module given by
\[
\mathbf P_{S,\mathbf f,T,\ell}(j)=\HH_\ell\bigl(S_{\mathbf f\preceq \mathbf t_j}\bigr),
\]
with structure maps induced by inclusion whenever $j\preceq_T j'$.
\end{definition}

\begin{theorem}[Speed of semi-algebraic multi-persistence]
\label{thm:speed}
Let $S\subset \R^n$ be a bounded $\mathcal P$-closed semi-algebraic set and let $\mathbf f:S\to \R^p$ be a $\mathcal Q$-closed continuous semi-algebraic map.
Set
\[
s=\card(\mathcal P)+\card(\mathcal Q),
\qquad
d=\max\bigl(\deg(\mathcal P),\deg(\mathcal Q)\bigr).
\]
Then for every $\ell\ge 0$ and every $N\ge 1$, the number of \emph{strong} (and hence also \emph{weak}) equivalence classes among the finite poset modules
\[
\mathbf P_{S,\mathbf f,T,\ell},\qquad T\in (\R^p)^N,
\]
is bounded by
\begin{equation}
\label{eqn:thm:speed:1}
\sum_{j=1}^{pN} \binom{C\,N^{2}}{j}\, C^{pN}
\;=\;
N^{(1+o(1))\,pN},
\end{equation}
where $C=(sd)^{(np)^{O(\ell)}}$.
\end{theorem}

\begin{remark}
\label{rem:speed:o1}
In \eqref{eqn:thm:speed:1}, the term $o(1)$ depends on the parameters $n,p,\ell,s,$ and $d$ (equivalently, on $S$ and $\mathbf f$ through a bound on their description complexity).
\end{remark}

\begin{remark}[Tightness]
\label{rem:tight}
The bound \eqref{eqn:thm:speed:1} is close to best possible in general.

\smallskip\noindent
\emph{Strong equivalence.}
Take $p=1$, $\ell=0$, $S=[0,1]^N$, and $f=X_1$ (equivalently, the projection to the first coordinate).
For $T=(t_1,\ldots,t_N)\in [0,1]^N$ with pairwise distinct coordinates, the induced order $\preceq_T$ on $[N]$ is a total order determined by the relative order type of $(t_1,\ldots,t_N)$.
Different order types yield non-isomorphic posets on $[N]$, hence distinct strong equivalence classes of modules. 
Thus we obtain at least $N!=N^{(1-o(1))N}$ strong equivalence classes.

\smallskip\noindent
\emph{Weak equivalence.}
In the preceding example, all total orders on $[N]$ are isomorphic as posets, so the corresponding modules fall into a single weak equivalence class.
By contrast, already for $p=2$ one can obtain many weak equivalence classes: for $S=[0,1]^2$, $\mathbf f=(X_1,X_2)$, and $\ell=0$, one can show that the number of weak equivalence classes among $\mathbf P_{S,\mathbf f,T,0}$ with $T\in (\R^2)^N$ is at least $2^{N-1}$.

\smallskip\noindent
Finally, recall that the total number of labelled poset structures on $[N]$ is asymptotic to $2^{N^2/4+o(N^2)}$ \cite{KR70}. 
Theorem~\ref{thm:speed} implies that, for each fixed $p$, the number of posets realizable as induced subposets on an $N$-point subset of $\R^p$ is at most $N^{(1+o(1))pN}$.
\end{remark}

\begin{remark}[Comparison with the graph-theoretic speed bound]
\label{rem:comparison}
It is instructive to compare Theorem~\ref{thm:speed} with its graph-theoretic counterpart for semi-algebraic graph classes. 
An $F$-graph may contain directed cycles, whereas the underlying directed graph of a poset is acyclic; thus, at the level of underlying edge sets, there are far fewer labelled acyclic directed graphs than all labelled directed graphs on $[N]$. 
On the other hand, a poset module assigns to each comparable pair a linear map, so the data are richer than a mere edge set.
Indeed, we show later (see Example~\ref{eg:infinite}) that even for a fixed finite poset and a fixed upper bound on the dimensions of the vector spaces (already $\le 2$), the set of strong (and even weak) equivalence classes of poset modules can be infinite, because of the freedom in choosing the linear maps.
\end{remark}

We also have the following uniform version of Theorem~\ref{thm:speed}. 
Although the asymptotic form of the bound in $N$ is the same, the dependence of the implicit constants on the parameters $s,d,n,p$ is worse than in Theorem~\ref{thm:speed}.

\begin{theorem}[Uniform speed bound]
\label{thm:speed:uniform}
Fix integers $n,p\ge 1$, $\ell\ge 0$, and positive integers $s,d$.
For each $N\ge 1$, consider the collection of poset modules of the form
\[
\mathbf P_{S,\mathbf f,T,\ell},
\qquad T\in(\R^p)^N,
\]
where $S\subset \R^n$ is a bounded $\mathcal P$-closed semi-algebraic set and $\mathbf f:S\to \R^p$ is a $\mathcal Q$-closed continuous semi-algebraic map, for some finite sets
\[
\mathcal P\subset \R[X_1,\ldots,X_n]_{\le d},
\qquad
\mathcal Q\subset \R[X_1,\ldots,X_n,Y_1,\ldots,Y_p]_{\le d}
\]
satisfying $\card(\mathcal P)+\card(\mathcal Q)\le s$.
Then the number of \emph{strong} (and hence also weak)
equivalence classes in this collection is at most
\[
N^{(1+o(1))\,pN},
\]
where the $o(1)$ term (and the implied constants) may depend on $n,p,\ell,s,$ and $d$, but not on $N$.
\end{theorem}

\section{Background, significance and prior work}
\label{sec:background}

\subsection{Algorithmic semi-algebraic geometry}
The algorithmic computation of topological invariants of semi-algebraic sets has a long history. 
Given a semi-algebraic set described by a quantifier-free formula with polynomial atoms (for instance of the form $P>0$ with $P\in \D[X_1,\ldots,X_n]$), one can study decision and counting problems such as emptiness, the number of semi-algebraically connected components, higher Betti numbers, and the Euler--Poincar\'e characteristic; see, e.g., \cite{SS,BPRbettione,Bas05-first,BCL2019,BCT2020.1,BCT2020.2}. 
More recently, Basu and Karisani \cite{Basu-Karisani-2} developed algorithms for computing a different type of topological invariant---namely, persistent homology barcodes arising from filtrations of semi-algebraic sets by sublevel sets of continuous semi-algebraic functions. 
The present paper extends these one-parameter results to multi-parameter semi-algebraic filtrations and derives quantitative consequences, including complexity bounds and speed-type estimates.

\subsection{Complexity: singly vs.\ doubly exponential}
Complexity bounds for algorithms computing topological invariants are central to our results. 
We briefly summarize the current state of the art for computing invariants of semi-algebraic sets, distinguishing between doubly exponential and singly exponential regimes (with complexity measured as in Definition~\ref{def:complexity}).

\subsubsection{Doubly exponential}
Closed and bounded semi-algebraic sets $S\subset \R^n$ admit semi-algebraic triangulations, and given a quantifier-free description of $S$, such a triangulation can be computed effectively. 
However, the complexity of triangulation is doubly exponential in $n$: more precisely it is bounded by $(sd)^{2^{O(n)}}$, where $s$ is the number of polynomials appearing in the input and $d$ is their maximum degree.
Combining triangulation with standard linear-algebra routines yields an algorithm for computing all Betti numbers of $S$ with doubly exponential complexity.\footnote{Throughout the paper, all homology groups are taken with coefficients in a fixed field $\kk$.}
This approach is not sensitive to the homological degree: the doubly exponential dependence persists even if one seeks only low-dimensional invariants, e.g.\ the zeroth Betti number (the number of semi-algebraically connected components).

\subsubsection{Singly exponential}
In contrast, classical bounds from Morse theory yield singly exponential upper bounds on the Betti numbers of semi-algebraic sets \cite{OP,T,Milnor2,B99,GV07}.
On the algorithmic side, singly exponential algorithms are known for several fundamental problems, including:
computing the zeroth Betti number via roadmap methods \cite{Canny93a,GV92,BPR99,BRSS14,BR14}, and computing the Euler--Poincar\'e characteristic via Morse-theoretic techniques \cite{B99,BPR-euler-poincare}.
These algorithms run in time bounded by $(O(sd))^{n^{O(1)}}$.

This motivated the search for singly exponential algorithms for higher Betti numbers as well. 
The current state of the art is that, for each fixed $\ell\ge 0$, one can compute the first $\ell$ Betti numbers of a semi-algebraic set with complexity bounded by $(sd)^{n^{O(\ell)}}$ \cite{Bas05-first,BPRbettione,Basu-Karisani-1}.
By contrast, the best known complexity bound for computing \emph{all} Betti numbers of a semi-algebraic set in $\R^n$ remains doubly exponential in $n$ \cite{SS,Basu_survey}.

More recently, singly exponential algorithms have been obtained for several further tasks in semi-algebraic geometry, including: computing a semi-algebraic basis for the first homology group \cite{Basu-Percival}; computing homology (in fixed degrees) functorially on diagrams of semi-algebraic sets \cite{Basu-Karisani-3}; and, most relevant for the present work, computing low-dimensional barcodes of one-parameter semi-algebraic filtrations \cite{Basu-Karisani-2}. 
Beyond computational efficiency, developing singly exponential algorithms often reveals structural properties of semi-algebraic sets that are of independent interest (see Remark~\ref{rem:byproduct}).

\begin{remark}
\label{rem:condition}
Algorithms for computing Betti numbers have also been developed in numerical models of computation with explicit complexity estimates \cite{BCL2019,BCT2020.1,BCT2020.2}. 
In these results the complexity depends on a condition number associated to the input, and the algorithm may fail on ill-conditioned instances (when the condition number becomes infinite). 
The numerical setting is also methodologically distinct from our approach: we work in an algebraic model intended to apply uniformly over arbitrary real closed fields and ordered domains of coefficients (including non-Archimedean ones), with complexity bounds depending only on combinatorial input parameters such as $s$ and $d$, and not on the size of the coefficients. 
For these reasons the two kinds of results are not directly comparable.

\medskip
To the best of our knowledge, the numerical algorithms mentioned above have not been extended to persistent homology of semi-algebraic filtrations. Such an extension would require an appropriate notion of a condition number for a semi-algebraic filtration, which does not currently appear to be available. 
Finally, numerical algorithms typically do not yield quantitative structural consequences of the kind obtained here---for example, the speed bounds in Theorems~\ref{thm:speed} and \ref{thm:speed:uniform} (and other byproducts; see Remark~\ref{rem:byproduct}).
\end{remark}

\begin{remark}
\label{rem:byproduct}
A recurring theme in algorithmic semi-algebraic geometry is that a careful correctness and complexity analysis of an algorithm often yields quantitative mathematical statements of independent interest. 
For example, this perspective has led to quantitative versions of the curve selection lemma \cite{BR2021}, bounds on the radius of a ball guaranteed to intersect every semi-algebraically connected component of a given semi-algebraic set \cite{BR10}, and, more recently, effective bounds on the {\L}ojasiewicz exponent \cite{BN2024}, among many others.
In the same vein, the speed bounds proved in Theorems~\ref{thm:speed} and \ref{thm:speed:uniform} can be viewed as byproducts of our main algorithmic contribution---namely, the construction and complexity analysis of Algorithm~\ref{alg:main:ss}.
\end{remark}

\medskip
\subsection{Significance of Theorems~\ref{thm:main}, \ref{thm:main:mu} and \ref{thm:alg:main:mu}}
\label{subsec:significance:1-2}
Semi-algebraic filtrations arise naturally in a range of applications (see, for example, \cite{Basu-Karisani-2,Miller2015} and, more recently, \cite[Open Questions (ix), p.~433]{AMMW2024}). 
In such settings it is important to have explicit, quantitative procedures that (i) describe the induced multi-persistence module and (ii) reduce it effectively to finite restrictions that can be computed and compared.

\medskip
Our notion of complexity for the poset module associated to a semi-algebraic multi-filtration is intended to capture the intrinsic combinatorial/topological complexity of this module in a way that is compatible with algorithmic computation (as explained in the next paragraph).
The singly exponential bound proved in Theorem~\ref{thm:main} is in the same spirit as classical singly exponential bounds for topological invariants of semi-algebraic sets, such as Morse-theoretic bounds on Betti numbers \cite{OP,T,Milnor2,GV07}. 
Moreover, Theorem~\ref{thm:alg:main:mu} provides a singly exponential-time algorithm (for fixed $\ell$) to compute the barcode function $\mu_\ell(S,\mathbf f)$, extending the one-parameter result of \cite{Basu-Karisani-2} and paralleling singly exponential algorithms for low-dimensional Betti numbers in the non-persistent setting \cite{Bas05-first,BPRbettione,Basu-Karisani-1}.

It is worth emphasizing that the passage from $p=1$ to $p>1$ is not immediate: when $p>1$, the product order on $\R^p$ is only partial, and the resulting persistence invariants live on the much more complicated parameter space of comparable pairs in $\R^p\times \R^p$. 
Nevertheless, multi-filtrations are important in practice and arise in several applied and computational contexts \cite{CZ2007,CSZ2010,SCLRO2017}.

\subsection{Significance of Theorems~\ref{thm:speed} and \ref{thm:speed:uniform}}
\label{subsec:significance:3-4}

\subsubsection{Finite vs.\ infinite}
In the case of labelled directed graphs on the vertex set $[N]$ (allowing self-loops but no multiple edges), the number of distinct graphs is finite, namely $2^{N^2}$.

For finite persistence modules---that is, functors $\mathbf P:P\to \Vect_{\kk}$ from a finite poset $P$---the situation is more subtle. Fix a finite poset $P$ and an integer $M\ge 0$, and consider only those functors $\mathbf P:P\to \Vect_{\kk}$ satisfying the uniform dimension bound
\begin{equation}
\label{eqn:M}
\dim \mathbf P(a)\le M \qquad (a\in P).
\end{equation}
If $P$ is a chain (linearly ordered), then the isomorphism class of $\mathbf P$ is determined by the ranks of the structure maps $\mathbf P(a\preceq b)$, and since each rank lies in $\{0,1,\ldots,M\}$, there are only finitely many isomorphism classes of such functors.

In contrast, finiteness can fail dramatically for more general posets, even under the bound \eqref{eqn:M} with $M=2$.

\begin{example}
\label{eg:infinite}
Let $\kk$ be an infinite field, and let $P$ be the finite poset with Hasse diagram
\[
\xymatrix{
&& v && \\
w \ar[urr] & x\ar[ur] && y \ar[ul] & z \ar[ull]
}
\]
(i.e.\ $w,x,y,z\prec v$ and no other comparabilities). For $(a,b)\in \kk^2\setminus\{(0,0)\}$, define a poset module
$\mathbf P_{a,b}:P\to \Vect_{\kk}$ by
\[
\mathbf P_{a,b}(v)=\kk^2,
\qquad
\mathbf P_{a,b}(w)=\mathbf P_{a,b}(x)=\mathbf P_{a,b}(y)=\mathbf P_{a,b}(z)=\kk,
\]
and, for the covering relations into $v$,
\begin{align*}
\mathbf P_{a,b}(w\to v)&=[1,0]^t, &
\mathbf P_{a,b}(x\to v)&=[0,1]^t,\\
\mathbf P_{a,b}(y\to v)&=[1,1]^t, &
\mathbf P_{a,b}(z\to v)&=[a,b]^t,
\end{align*}
where the right-hand side denotes the matrix of the corresponding element of $\Hom_{\kk}(\kk,\kk^2)$ with respect to the standard bases.

\begin{claim}
If $[a:b]\neq [c:d]$ in $\PP^1_{\kk}$, then $\mathbf P_{a,b}\not\cong \mathbf P_{c,d}$. In particular, there are infinitely many pairwise non-isomorphic poset modules $\mathbf P:P\to \Vect_{\kk}$ satisfying $\dim \mathbf P(u)\le 2$ for all $u\in P$.
\end{claim}

\begin{proof}
Suppose $F:\mathbf P_{a,b}\to \mathbf P_{c,d}$ is an isomorphism of functors, and let $\tau=F(v)\in \mathrm{GL}_2(\kk)$.
Naturality forces $\tau$ to carry the images of the maps $w\to v$, $x\to v$, and $y\to v$ to themselves, i.e.\ it must preserve the three lines
\[
\kk(1,0)^t,\quad \kk(0,1)^t,\quad \kk(1,1)^t \subset \kk^2.
\]
The only invertible linear maps preserving these three distinct lines are scalar multiples of the identity; hence $\tau=\lambda I$ for some $\lambda\in \kk^\times$.
Therefore $\tau$ sends the line $\kk(a,b)^t$ to $\kk(c,d)^t$ if and only if these lines coincide, i.e.\ if and only if $[a:b]=[c:d]$ in $\PP^1_{\kk}$.
\end{proof}
\end{example}

Example~\ref{eg:infinite} shows that, even for a fixed finite poset and a uniform dimension bound $\le 2$, the set of strong (and hence also weak) equivalence classes of poset modules can be infinite.
This is precisely the phenomenon alluded to in Remark~\ref{rem:comparison}: even when the underlying finite poset is fixed and the vector-space dimensions are uniformly bounded, varying the linear maps can produce infinitely many non-isomorphic (and even non-weakly-equivalent) poset modules.
This highlights that the finiteness asserted by the speed bounds in Theorems~\ref{thm:speed} and \ref{thm:speed:uniform} is not automatic a priori (except in the one-parameter situation $p=1$, where the indexing posets are chains).

\medskip
Finally, unlike the case $p=1$, where one has a complete classification of pointwise finite-dimensional persistence modules via the structure theorem for finitely generated $\kk[X]$-modules (see, e.g., \cite{CZ2005}), the multi-parameter case $p>1$ is substantially more complicated and admits no comparable classification. There is therefore a large body of work devoted to computable invariants for multi-parameter persistence modules over finite posets \cite{miller2020homological,DX2022,DKM2024,BOO2022}. From this perspective, it is useful to emphasize that finite poset modules arising from semi-algebraic multi-filtrations form a very special subclass among all poset modules on the same underlying poset. In analogy with the rich theory of semi-algebraic graph classes \cite{Alon-Pach-et-al,Fox-et-al}, it would be interesting to investigate further structural properties of this subclass; we do not pursue this direction here.

\subsection{Prior work}
Persistent homology is a central object in topological data analysis \cite{dey2022computational,Weinberger_survey,Ghrist}, and it has also found applications across mathematics and scientific computing (see, for example, \cite{Ellis-King,Manin-Marcolli}). 
Persistent homology may be associated to any filtration of spaces; it refines ordinary homology, which is recovered from the constant filtration. 
Much of the classical theory and many computational tools were developed first in the one-parameter setting.

\medskip
In recent years, multi-parameter persistence has become an active area of research. 
Because persistence modules indexed by $\R^p$ (with the product order) do not admit a classification analogous to the one-parameter barcode theorem, a substantial literature has focused on structural frameworks and computable invariants for multi-parameter modules; see, for instance, \cite{Miller2023,arya2023} and the references therein.

\medskip
From a different viewpoint, the study of persistence for semi-algebraic filtrations using methods from algorithmic real algebraic geometry is more recent. 
In the one-parameter semi-algebraic setting, Basu and Karisani \cite{Basu-Karisani-2} introduced an effective barcode formalism and provided singly exponential algorithms (in fixed homological degree) to compute it. 
On the abstract side, Kashiwara--Schapira \cite{KS2018} (and related work such as \cite{Miller2023,Berkouk2023,Curry-Patel}) recast persistence in the language of constructible sheaves, where persistence-type invariants arise functorially via sheaf-theoretic direct images and the closure properties afforded by the six-functor formalism (see, e.g., \cite{Kashiwara-book}). 
These sheaf-theoretic approaches provide a powerful conceptual framework and apply in settings beyond the semi-algebraic category (e.g.\ subanalytic filtrations), but they are not designed to yield the kind of explicit, quantitative complexity bounds that are the focus of the present paper.
Our results can be viewed as providing, within the semi-algebraic category, a constructive and quantitative counterpart to this general sheaf-theoretic perspective.

\section{Proofs of Theorems~\ref{thm:main}, \ref{thm:main:mu}, 
and \ref{thm:alg:main:mu}
}
\label{sec:proof:1-2}

\subsection{Effective construction of semi-algebraic poset modules}
\label{subsec:input-output}

We first describe an algorithm which, given a semi-algebraic multi-filtration $(S,\mathbf f)$ and an integer $\ell\ge 0$, produces explicit semi-algebraic data encoding the poset modules
$\mathbf P_{S,\mathbf f,\ell}$ and its normalized restriction $\mathbf P_{S,\mathbf f,\ell}\!\restriction_{\PP_p}$.
The correctness of the construction implies that these modules are semi-algebraically constructible, and the accompanying complexity analysis yields explicit upper bounds on their complexities in the sense of Definition~\ref{def:constructible-poset-module}. This will be the main input in the proof of Theorem~\ref{thm:main}.

\medskip
More precisely, we specify an algorithm with the following input--output behavior.

\noindent\textbf{Input.}
\begin{enumerate}[(a)]
\item finite sets of polynomials
$\mathcal P \subset \D[X_1,\ldots,X_n]$ and
$\mathcal Q \subset \D[X_1,\ldots,X_n,Y_1,\ldots,Y_p]$;
\item a $\mathcal P$-closed formula $\Phi$ with $\RR(\Phi)=S\subset \R^n$;
\item a $\mathcal Q$-closed formula $\Psi$ with $\RR(\Psi)=\mathrm{graph}(\mathbf f)\subset \R^{n+p}$, where $\mathbf f:S\to \R^p$ is continuous and $\mathcal Q$-closed;
\item an integer $\ell\ge 0$.
\end{enumerate}

\noindent\textbf{Output.}
\begin{enumerate}[(a)]
\item an integer $N>0$;
\item a finite set $\mathcal C\subset \D[Y_1,\ldots,Y_p]$ containing the polynomials $Y_i\pm 1$ ($1\le i\le p$), and for each
$C\in \Cc(\mathcal C)$ an integer $N_C\ge 0$;
\item a finite set $\mathcal D\subset \D[Y_1,\ldots,Y_p,Y_1',\ldots,Y_p']$ containing, for each $1\le i\le p$, the polynomials
\[
\ Y_i\pm 1,\ Y_i'\pm 1,\ Y_i-Y_i',
\]
and for each cell $D\in \Cc(\mathcal D)$ with $D\subset \Pairs(\R^p)$, a matrix $M_D\in \kk^{N\times N}$.
\end{enumerate}
(Thus $\PP_p$ and $\PP_p^{\max}$ are $\mathcal C$-semi-algebraic, and the sets $\Pairs(\R^p)$, $\PP_p^o\times \PP_p$, and $\PP_p^o\times \PP_p^{\max}$ are $\mathcal D$-semi-algebraic.)

\medskip
The output satisfies the following property.

\begin{property}
\label{property:input-output}
The poset module $\mathbf P_{S,\mathbf f,\ell}$ (resp.\ $\mathbf P_{S,\mathbf f,\ell}\!\restriction_{\PP_p}$) is isomorphic (Definition~\ref{def:poset-module-isomorphism}) to the semi-algebraically constructible poset module
\[
\mathbf P:(\R^p,\preceq)\to \Vect_{\kk}
\qquad
\text{(resp.\ $\mathbf P:(\PP_p,\preceq)\to \Vect_{\kk}$)}
\]
defined as follows (see Notation~\ref{not:matrix}).
For $\y\in \R^p$, let $C(\mathcal C,\y)$ denote the unique cell $C\in \Cc(\mathcal C)$ containing $\y$, and for $(\y,\y')\in \R^p\times \R^p$ let
$C(\mathcal D,(\y,\y'))$ denote the unique cell $D\in \Cc(\mathcal D)$ containing $(\y,\y')$.
Then
\begin{align*}
\mathbf P(\y) &= \kk^{\,N_{C(\mathcal C,\y)}},\\
\mathbf P(\y\preceq \y') &= 
L_{\Trunc^{N}_{\,N_{C(\mathcal C,\y')},\,N_{C(\mathcal C,\y)}}\!\bigl(M_{C(\mathcal D,(\y,\y'))}\bigr)}.
\end{align*}
\end{property}

\subsection{Algorithmic preliminaries}

\subsubsection{Algorithm for enumerating $\Cc(\mathcal{P})$}
\label{subsec:enumerating-CC}
One basic algorithm with singly exponential complexity is an algorithm
that given a finite set $\mathcal{P} \subset \D[X_1,\ldots,X_n]$ as input computes a finite set of points guaranteed to intersect every $C \in \Cc(\mathcal{P})$  (see for example \cite[Algorithm 13.2 (Sampling)]{BPRbook2}). This algorithm in conjunction with an algorithm 
(see for example \cite[Algorithm 16.4 (Uniform Roadmap)]{BPRbook2})
for computing roadmaps of semi-algebraic sets, gives an algorithm for
computing exactly one point in every $C \in \Cc(\mathcal{P})$ -- and thus also enumerate the elements of $\Cc(\mathcal{P})$. Taking into account the complexity of \cite[Algorithm 13.2 (Sampling)]{BPRbook2}) and
\cite[Algorithm 16.4 (Uniform Roadmap)]{BPRbook2}), the complexity of the resulting algorithm is bounded by $(sd)^{n^{O(1)}}$, where
$s = \card(\mathcal{P})$ and $d = \deg(\mathcal{P})$.
We are going to use this algorithm for enumerating the elements of 
$\Cc(\mathcal{P})$ and computing exactly one point in each $C \in \Cc(\mathcal{P})$ implicitly without mentioning in the description of the more complicated algorithms that we describe later.

\subsubsection{Parametrized versions of algorithms in real algebraic geometry}
\label{subsec:parametrized}

Many algorithms in real algebraic geometry (see for example \cite[Chapters 13-16]{BPRbook2}) have the following form. They take as 
input a finite set $\mathcal{P}$ of polynomials with coefficients in an ordered  domain $\D$ contained in a real closed field $\R$, and also a $\mathcal{P}$-formula,
and produces as output 
discrete objects (for example, a list of sign conditions on a set of polynomials, or a simplicial complex etc. or some topological invariant like the Betti numbers of the $\mathcal{P}$-semi-algebraic set specified in the input etc.), as well as some algebraically defined objects, such as set of points represented by real univariate representations, or 
a set of semi-algebraic curves represented by parametrized real univariate representations, or even more generally semi-algebraic maps 
whose graphs are described by formulas. We will refer to these parts as discrete and the algebraic parts of the output respectively.

\medskip
We will need to use \emph{parametrized versions} of three different algorithms.
By parametrized version of a given algorithm $\mathbf{A}$  with parameters $Y = (Y_1,\ldots,Y_p)$ we mean an algorithm that takes as input a finite set $\mathcal{P}$ of polynomials with coefficients in $\D[Y_1,\ldots,Y_p]$ (instead of $\D$). 
The output of the algorithm consists of two parts.
\begin{enumerate}[(a)]
    \item A finite set $\mathcal{F} \subset \D[Y_1,\ldots,Y_p]$, having the property that for each $C \in \Cc(\mathcal{F})$, the discrete part of the output of the Algorithm $\mathbf{A}$, with input $\mathcal{P}(\y,\cdot)$, is the same for all $\y \in C$;
    
    \item For each $C \in \Cc(\mathcal{F})$:
        \begin{enumerate}
            \item the discrete part of the output of Algorithm $\mathbf{A}$ with input $\mathcal{P}(\y,\cdot)$ some (or all) $\y \in C$ (so this is constant as 
            $\y$ varies over $C$);
            \item the  (varying) algebraic part of the output of Algorithm $\mathbf{A}$ parametrized by $Y$, given by formula(s), $\psi_C(Y)$ so that for all $\y \in C$, the output of Algorithm $\mathbf{A}$ with input $\mathcal{P}(\y,\cdot)$ is obtained by specializing $Y$ to $\y$ i.e. is equal to $\psi_C(\y)$.    
        \end{enumerate}
\end{enumerate}

\begin{example}[Basic example 1: $\Elim_X$]
    \label{eg:Elim}
    Let $\mathcal{P} \subset \D[Y_1,\ldots,Y_p,X]_{\leq d}$ and 
    \[
    \phi(Y_1,\ldots,Y_p,X)
    \] be a $\mathcal{P}$-closed formula 
    so that
    $\RR(\phi(\y,X)$ is bounded for all $\y \in \R^p$. We consider the problem of parametrized triangulation of $\RR(\phi(\y,X)) \subset \R$ as $\y$ varies over $Y$. Let  
    $\mathcal{F} = \Elim_X(\mathcal{P}) \subset \D[Y_1,\ldots,Y_p]$
    where $\Elim_X(\mathcal{P})$ is defined in \cite[Notation 5.15]{BPRbook2}.
    The for each $C \in \Cc(\mathcal{F})$, there exists continuous semi-algebraic functions $\xi_1,\ldots,\xi_{M_C}: C \rightarrow \R$, with
    $\xi_1 < \ldots < \xi_{M_C}$, and such that for each $\y \in C$,
    $\xi_1(\y)< \cdots < \xi_{M_C}(\y)$ are the ordered set of real roots of all the polynomials in $\mathcal{P}(\y,X)$ which are not identically $0$ on $C$ \cite[Theorem 5.16]{BPRbook2}. Thus, one obtains a triangulation 
    (i.e. closed intervals and their end-points) of $\RR(\phi(\y,X))$
    which has the same combinatorial structure for all $\y \in C$.

\medskip
    It follows from the definition of $\mathcal{F}$ that
    $\card(\mathcal{F}) \leq s d^{O(1)}$, and $\deg(\mathcal{F}) \leq d^{O(1)}$. Finally, the complexity of the parametrized algorithm 
    for triangulating a semi-algebraic subset of $\R$, is bounded by 
    $(s d)^{O(p)}$.
\end{example}
\subsubsection{Parametrized algorithm for computing semi-algebraic triangulation}
The technique described in Example~\ref{eg:Elim} can be extended 
iteratively (see \cite[Chapter 5]{BPRbook2}) to obtain an algorithm for triangulating closed and bounded semi-algebraic sets in higher dimensions.
More precisely, we will use a parametrized version of the following algorithm.

\begin{algorithm}[H]
\caption{(Triangulation of semi-algebraic sets)}
\label{alg:triangulation}
\begin{algorithmic}[1]
\INPUT
\Statex{
\begin{enumerate}[(a)]
\item 
a finite set of $s$ polynomials ${\mathcal P} \subset \D[X_1,\ldots,X_n]$;
\item ${\mathcal P}$-formulas $\phi_1,\ldots,\phi_N$ such that
for each $i, 1 \leq i \leq N$, $\RR(\phi)$ is a closed and bounded semi-algebraic set. 
\end{enumerate}
}
\OUTPUT
\Statex{
\begin{enumerate}[(a)]
\item A finite set of polynomials $\mathcal{Q} \subset \R[X_1,\ldots,X_n]$;
\item 
    a finite simplicial complex
    $K$, and subcomplexes $K_1,\ldots,K_N \subset K$;
\item a $\mathcal{Q}$-formula, $\psi$, such that $\RR(\psi)$ is the graph of a semi-algebraic homeomorphism $h:|K| \rightarrow \bigcup_{i=1}^N S_i$,
    which restricts to a semi-algebraic homeomorphisms 
    $h_i = h|_{|K|}: |K_i| \rightarrow \RR(\phi_i)$.
    \end{enumerate}
}
\COMPLEXITY
The complexity of the algorithm is bounded by  
$N (s d)^{2^{O(n)}}$, and the size of the simplicial complex $\Delta$ is also bounded by $(s d)^{2^{O(n)}}$.
\end{algorithmic}
\end{algorithm}

The parametrized version
of Algorithm~\ref{alg:triangulation} will
output a finite subset 
\[
\mathcal{F} \subset \D[Y_1,\ldots,Y_p],
\]
with 
\[
\card(\mathcal{F}) \leq (sd)^{2^{O(n)}}, \deg(\mathcal{F}) \leq d^{2^{O(n)}},
\]
as well as
a finite set of polynomials $\mathcal{Q} \subset \R[Y_1,\ldots,Y_p, X_1,\ldots,X_n]$, and for each $C \in \Cc(\mathcal{F})$,
\begin{enumerate}[(a)]
\item 
    a finite simplicial complex
    $K_C$, and subcomplexes $K_{C,1},\ldots,K_{C,N} \subset K_C$;
\item a $\mathcal{Q}$-formula, $\psi_C$, such that $\RR(\psi_C(\y,X)$ is the graph of a semi-algebraic homeomorphism $h_C:|K_C| \rightarrow \bigcup_{i=1}^N \RR(\phi_i(\y,X))$,
    which restricts to a semi-algebraic homeomorphisms 
    $h_{C,i} = h|_{|K_C|}: |K_{C,i}| \rightarrow \RR(\phi_i(\y,X))$.
    \end{enumerate}
The complexity of this parametrized algorithm is bounded by 
\begin{equation}
\label{eqn:complexity:triangulation}
(sd)^{2^{O(n)} p^{O(1)}}.
\end{equation}

The parametrized version (with parameters $Y = (Y_1,\ldots,Y_p)$) is obtained by following the same algorithm as the unparametrized case
with $n+p$ variables, applying the $\Elim_{X_i}$ operator successively
for $i=n,n-1,\ldots,1$ and taking $\mathcal{F} \subset \D[Y_1,\ldots,Y_p]$ the resulting family of polynomials. In order to obtain a triangulation one needs to make also a linear change in the
$X$-coordinates which we are overlooking here (\cite[Chapter 5]{BPRbook2} for details). The complexity of the algorithm is obtained by tracking the degree and the cardinality of the sets of polynomials obtained after applying the $\Elim_{X_i}$ operators, and these square at each step leading to a doubly exponential (in $n$) complexity.

\begin{example}[Basic Example 2: $\BElim_X$]
    \label{eg:BElim}
    The method of applying the $\Elim_X$ operator explained in Example~\ref{eg:Elim} one variable at a time leads to doubly exponential complexity. There is a more efficient algorithm which is the basis of all singly exponential algorithm, which is based on the  ``critical point method'' that is used to eliminate a block of variables at one time. 

    Given $\mathcal{P} \subset \D[Y_1,\ldots,Y_p,X_1,\ldots,X_n]_{\leq d}$,
    there exists a finite set 
    \[
    \mathcal{F} := \BElim_X(\mathcal{P}) \subset \D[Y_1,\ldots,Y_p]
    \]
    (see \cite[Algorithm 14.1]{BPRbook2} for a precise definition)
    such that for each $C \in \Cc(\mathcal{F})$, the set 
    $\Sign(\mathcal{P}(\y,X))$ stay invariant as $\y$ varies of $C$
    \cite{BPRbook2}. Moreover, $\card(\mathcal{F}) \leq s^{n+1} d^{O(n)}$, $\deg(\mathcal{F}) \leq d^{O(n)}$. 
    While the property ensured by $\BElim_X$ is weaker than that
    in the case of repeated application of $\Elim_X$, it is still the basis of (the parametrized versions) of several singly exponential complexity algorithms that we will use later (Algorithms~\ref{alg:cover} and \ref{alg:poset}).
\end{example}

\subsection{Topological preliminaries}
\label{subsec:top}
We also need the following basic facts from algebraic topology.

\begin{definition}[Closed cover associated to a triangulation]
\label{def:triangulation-cover}
    Given a semi-algebraic triangulation $h: |\Delta| \rightarrow S$, where $S$ is a closed and bounded semi-algebraic set, we denote by
    $\Cov(h)$ the covering of $S$ by the images of the closed simplices of $|\Delta|$.
\end{definition}

\begin{definition}[Nerve of a closed cover]
\label{def:nerve-cover}
    Given a cover $C$ of a closed and bounded semi-algebraic set $S$,
    we denote by $\Nerve(C)$, the nerve complex of $C$. We say that $C$ is a good cover if each element of $C$ is semi-algebraically contractible. In particular, if $h: |\Delta| \rightarrow S$ is a semi-algebraic triangulation then, $\Cov(h)$ is a good cover. 
\end{definition}

The following proposition is classical (see for example \cite{Bjorner}).
\begin{proposition}[Nerve Lemma]
\label{prop:nerve}
    If $C$ is a good cover of $S$, then there is a  canonical isomorphism
    $\psi_C: \HH_*(\Nerve(C)) \rightarrow \HH_*(S)$.
\end{proposition}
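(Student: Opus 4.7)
The plan is to use the Mayer--Vietoris double complex associated to the cover. Given a good cover $C=\{U_1,\ldots,U_m\}$ of $S$, form the first-quadrant double complex
\[
K_{p,q}\;=\;\bigoplus_{i_0<\cdots<i_p}C_q(U_{i_0}\cap\cdots\cap U_{i_p}),
\]
where $C_q$ denotes (semi-algebraic) singular chains, the vertical differential is the singular boundary, and the horizontal differential is the \v{C}ech alternating sum of inclusions. I would analyze the two spectral sequences of this double complex and show that they converge to $\HH_*(\Nerve(C))$ and $\HH_*(S)$ respectively, so that the induced map on total homology is the desired canonical isomorphism $\psi_C$.

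For the first spectral sequence (filtering by the \v{C}ech degree $p$), the $E_1$-page is computed by the vertical differential; the goodness of the cover forces $E_1^{p,q}=0$ for $q>0$ and
\[
E_1^{p,0}\;=\;\bigoplus_{i_0<\cdots<i_p}\kk\;=\;C_p(\Nerve(C)).
\]
The remaining horizontal differential is exactly the simplicial boundary in the nerve, so the spectral sequence collapses at $E_2$ to $\HH_p(\Nerve(C))$. For the second spectral sequence (filtering by singular degree $q$), one needs a generalized Mayer--Vietoris / descent statement: the augmented \v{C}ech complex of singular chains $0\to C_*(S)\to\bigoplus C_*(U_i)\to\bigoplus C_*(U_{ij})\to\cdots$ is acyclic (up to chain homotopy). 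This is standard after passing to sufficiently fine subdivision of chains and holds verbatim in the semi-algebraic category since semi-algebraic triangulations exist on each intersection. It gives $E_2^{0,q}=\HH_q(S)$ and zero elsewhere, hence collapse to $\HH_q(S)$. Comparing the two abutments of the total complex yields the canonical isomorphism.

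The main obstacle is the descent step for the second spectral sequence; I would either invoke the semi-algebraic analog of the classical Mayer--Vietoris argument (iterated two-set Mayer--Vietoris plus a straightforward induction on $m$), or argue equivalently by induction on $|C|$ directly: the base case $m=1$ is trivial since $U_1$ and $\Nerve(\{U_1\})$ are both contractible; for the inductive step, write $V=U_1\cup\cdots\cup U_{m-1}$, apply the Mayer--Vietoris sequence to $S=V\cup U_m$ and to the corresponding decomposition of $\Nerve(C)$ into the closed star of the vertex $U_m$ and the subcomplex $\Nerve(\{U_1,\ldots,U_{m-1}\})$, use the inductive hypothesis on $V$, $U_m$, and on the good cover $\{U_i\cap U_m\}_{i<m}$ of $V\cap U_m$, and conclude by the Five Lemma. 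Naturality of Mayer--Vietoris ensures the resulting isomorphism is canonical and functorial in refinements of covers, which is the version needed in subsequent arguments.
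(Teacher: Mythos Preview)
Your proof sketch is correct and follows the standard route to the Nerve Lemma (either via the Mayer--Vietoris double complex/spectral sequence, or the equivalent induction on $|C|$ with the Five Lemma). The paper, however, does not prove this proposition at all: it simply records it as classical and cites \cite{Bjorner}. So there is no ``paper's own proof'' to compare against; you have supplied what the paper deliberately omits.

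One point worth flagging. The paper's Definition~\ref{def:nerve-cover} calls $C$ a good cover if \emph{each element} of $C$ is semi-algebraically contractible; it does not explicitly require contractibility of the nonempty finite intersections. Your spectral-sequence argument (and indeed any proof of the Nerve Lemma) needs the intersections $U_{i_0}\cap\cdots\cap U_{i_p}$ to be acyclic in order to get $E_1^{p,q}=0$ for $q>0$; likewise your inductive step applies the hypothesis to the cover $\{U_i\cap U_m\}_{i<m}$, which is only ``good'' if those intersections are again contractible. In the paper's actual applications the covers come from semi-algebraic triangulations, where intersections of closed simplices are themselves closed simplices (hence contractible), so the stronger hypothesis holds automatically. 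But as written, the paper's definition is weaker than what the Nerve Lemma requires, and you are tacitly (and correctly) working with the standard stronger notion. If you want your write-up to be self-contained, state explicitly that you assume all nonempty finite intersections are contractible.

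A minor technical remark: since the covers here are by \emph{closed} semi-algebraic sets rather than open sets, the acyclicity of the augmented \v{C}ech complex of singular chains is not literally the textbook open-cover statement. Your appeal to semi-algebraic triangulability (and hence to simplicial chains, where the closed-cover Mayer--Vietoris is immediate) is the right way to handle this, and is consistent with how the paper uses the result.
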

One immediate corollary is the following.

\begin{corollary}
    There is a canonical isomorphism 
    \[
    \psi_{\Cov(h)}: \HH_*(\Nerve(\Cov(h))) \rightarrow \HH_*(S).
    \]
\end{corollary}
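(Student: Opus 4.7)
The plan is to obtain this corollary as a direct application of the Nerve Lemma (Proposition~\ref{prop:nerve}) to the specific cover $\Cov(h)$. So essentially no new argument is needed beyond verifying that the hypothesis of the Nerve Lemma is satisfied, which is already stated without proof in Definition~\ref{def:nerve-cover}; the only thing to do is to make that verification explicit and then invoke Proposition~\ref{prop:nerve}.

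First, I would observe that $\Cov(h)$ genuinely covers $S$: since $|\Delta|$ is the union of its closed simplices and $h:|\Delta|\to S$ is a (semi-algebraic) homeomorphism, the images $h(|\sigma|)$ for $\sigma$ ranging over the simplices of $\Delta$ form a finite closed semi-algebraic cover of $S$. Second, I would verify goodness, namely that each member $h(|\sigma|)$ is semi-algebraically contractible. A closed simplex $|\sigma|$ is semi-algebraically contractible to any of its vertices via the straight-line homotopy $H(x,t) = (1-t) x + t v$, which is a semi-algebraic map. Conjugating by $h$, the map $(x,t)\mapsto h(H(h^{-1}(x),t))$ is a semi-algebraic contraction of $h(|\sigma|)$ onto the point $h(v)$. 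Hence $\Cov(h)$ satisfies the hypothesis of Proposition~\ref{prop:nerve}, which immediately yields the canonical isomorphism $\psi_{\Cov(h)}:\HH_*(\Nerve(\Cov(h)))\to \HH_*(S)$.

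There is no serious obstacle here; the only mild point worth recording is that ``canonical'' here is inherited from the canonicity already claimed in the Nerve Lemma. As a cross-check (and in fact a stronger statement in this particular situation) one could also observe that $\Nerve(\Cov(h))$ is isomorphic as an abstract simplicial complex to the barycentric subdivision $\mathrm{sd}(\Delta)$ of $\Delta$, because a family of closed simplices of $\Delta$ has nonempty common intersection if and only if they form a chain under the face relation. Consequently $|\Nerve(\Cov(h))|$ is semi-algebraically homeomorphic to $|\Delta|$, and composing with $h$ one obtains a semi-algebraic homeomorphism $|\Nerve(\Cov(h))|\to S$ whose induced map on homology agrees with $\psi_{\Cov(h)}$. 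This geometric description will in fact be what is needed in later proofs, but for the statement of the corollary itself the appeal to Proposition~\ref{prop:nerve} is sufficient.
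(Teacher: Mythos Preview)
Your proposal is correct and follows essentially the same approach as the paper: the paper's proof is a single line citing Definitions~\ref{def:triangulation-cover}, \ref{def:nerve-cover} and Proposition~\ref{prop:nerve}, and you have simply unpacked the verification (already asserted in Definition~\ref{def:nerve-cover}) that $\Cov(h)$ is a good cover before invoking the Nerve Lemma. Your additional remark identifying $\Nerve(\Cov(h))$ with the barycentric subdivision of $\Delta$ is a nice sanity check but is not used in the paper's argument.
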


\begin{proof}
    Follows from Definitions~\ref{def:triangulation-cover}and \ref{def:nerve-cover}, and Proposition~
    \ref{prop:nerve}.
\end{proof}

\begin{proposition}
\label{prop:refinement}
   If $C' \subset C$ are two good covers of a closed and bounded semi-algebraic set $S$, then $\Nerve(C') \subset \Nerve(C)$, and the induced
   map $\HH_*(\Nerve(C')) \rightarrow \HH_*(\Nerve(C))$ is an isomorphism.
\end{proposition}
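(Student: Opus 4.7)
The plan is to deduce the isomorphism on homology from the naturality of the Nerve Lemma (Proposition~\ref{prop:nerve}) with respect to inclusion of covers. First I would verify the simplicial inclusion: a simplex of $\Nerve(C')$ is, by definition, a finite subset $\sigma \subset C'$ with $\bigcap_{U \in \sigma} U \neq \emptyset$. Since $C' \subset C$, any such $\sigma$ is also a simplex of $\Nerve(C)$, so we have a canonical simplicial inclusion $\iota: \Nerve(C') \hookrightarrow \Nerve(C)$.

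To prove that $\iota_*: \HH_*(\Nerve(C')) \to \HH_*(\Nerve(C))$ is an isomorphism, I would construct the commutative triangle
\[
\xymatrix{
\HH_*(\Nerve(C')) \ar[rr]^{\iota_*} \ar[dr]_{\psi_{C'}} & & \HH_*(\Nerve(C)) \ar[dl]^{\psi_C} \\
& \HH_*(S) &
}
\]
in which both downward arrows are the canonical isomorphisms of Proposition~\ref{prop:nerve}. Since any good cover is in particular a cover by closed and bounded semi-algebraic sets, the cleanest way to produce this triangle is via the Mayer--Vietoris (bar) double complex: to any cover $D$ of $S$ associate
\[
E_{p,q}(D) = \bigoplus_{\sigma \in \Nerve(D)_p} \Ch_q\Bigl(\bigcap_{U \in \sigma} U;\mathbf{k}\Bigr),
\]
where $\Ch_q$ denotes singular chains. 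Filtering one way yields a spectral sequence converging to $\HH_*(S)$ via the augmentation $\mathrm{Tot}(E_{*,*}(D)) \to \Ch_*(S;\mathbf{k})$; filtering the other way, and using that $D$ is a good cover (so all non-empty intersections are contractible and only the $q = 0$ row survives), yields the simplicial chain complex $\Ch_*(\Nerve(D);\mathbf{k})$. These are precisely the two quasi-isomorphisms underlying the canonical isomorphism $\psi_D$ of the Nerve Lemma.

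The inclusion $C' \hookrightarrow C$ induces a morphism of double complexes $E_{*,*}(C') \to E_{*,*}(C)$ compatible on one side with the simplicial inclusion $\iota$ and on the other side with the identity on $\Ch_*(S;\mathbf{k})$. Passing to homology produces the desired commutative triangle, and since both $\psi_{C'}$ and $\psi_C$ are isomorphisms, so is $\iota_*$.

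The main obstacle will be verifying naturality of the Nerve Lemma against inclusion of covers in the semi-algebraic setting. This is not serious: we are computing singular homology with field coefficients, every intersection appearing in either double complex is semi-algebraically contractible (hence has the homology of a point) by the good-cover hypothesis, and the morphism of double complexes $E_{*,*}(C') \to E_{*,*}(C)$ is tautological on both the simplicial side (it is $\iota$) and the singular side (it is the identity on $\Ch_*(S;\mathbf{k})$). An alternative, more hands-on, approach would build a semi-algebraic partition of unity subordinate to $C'$, use it to produce an explicit semi-algebraic map $S \to |\Nerve(C')|$ that is a homotopy inverse to the standard map $|\Nerve(C')| \to S$, and then verify that its composition with $|\iota|$ is homotopic to the analogous inverse for $C$; I would fall back on this only if the spectral sequence argument turns out to require machinery that the paper has not introduced.
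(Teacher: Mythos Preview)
Your proposal is correct and follows exactly the same approach as the paper: both arguments reduce to the commutative triangle with $\psi_{C'}$ and $\psi_C$ from the Nerve Lemma (Proposition~\ref{prop:nerve}) and conclude that $\iota_*$ is an isomorphism by two-out-of-three. The paper's proof simply asserts the commutativity of this triangle without further comment, whereas you supply the extra justification via the Mayer--Vietoris double complex; this additional care is not needed for the paper's purposes but is mathematically sound.
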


\begin{proof}
    We have the following commutative diagram 
    \[
    \xymatrix{
\HH_*(\Nerve(C')) \ar[rr]\ar[rd]^{\psi_{C'}} && \HH_*(\Nerve(C)) \ar[ld]^{\psi_C} \\
    & \HH_*(S)&
    }
  \]
where the $\psi_C',\psi_C$ are isomorphisms using Proposition~\ref{prop:nerve}.
This implies that the horizontal arrow is an isomorphism as well.
\end{proof}

\begin{definition}[Refinement of a semi-algebraic triangulation]
Given two semi-algebraic triangulations $h_1: |\Delta_1| \rightarrow S$,
$h_2: |\Delta_2| \rightarrow S$, we say that $h_2$ is a refinement of 
$h_1$, if the image by $h_2$ of every closed simplex of $|\Delta_2|$ is contained in the image by $h_1$ of some some closed simplex of $\Delta_1$.
\end{definition}

\begin{proposition}
\label{prop:refinement2}
    Suppose $h_1,h_2$ are two triangulations of a closed and bounded semi-algebraic set $S$ such that $h_2$ is a refinement of $h_1$. Then, $\Cov(h_1) \cup \Cov(h_2)$ is s good cover of $S$, and the following diagram, where the vertical arrow is induced by inclusion is commutative and all arrows are isomorphisms:
    \[
    \xymatrix{
    \Nerve(\Cov(h_1) \cup \Cov(h_2)) \ar[rrrr]^{\;\;\Psi_{\Cov(h_1) \cup \Cov(h_2)}} &&&& \HH_*(S) \\
    \Nerve(\Cov(h_1)) \ar[u] \ar[rrrru]_{\Psi_{\Cov(h_1)}} &&&&
    }
    \]
\end{proposition}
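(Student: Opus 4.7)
The plan is to verify that $\Cov(h_1) \cup \Cov(h_2)$ is a good cover, after which the assertions about the diagram follow formally from Propositions~\ref{prop:nerve} and \ref{prop:refinement}.

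First I would check that $\Cov(h_1) \cup \Cov(h_2)$ is a good cover of $S$. Each element is a closed simplex in one of the two triangulations and is therefore semi-algebraically contractible. The substantive point (what the standard Nerve Lemma really needs) is that every non-empty finite intersection of members of $\Cov(h_1) \cup \Cov(h_2)$ is contractible. The refinement hypothesis makes this tractable: every $\sigma_2 \in \Cov(h_2)$ lies inside a unique minimal closed simplex $\tau_1(\sigma_2)$ of $h_1$. Given any finite sub-collection of $\Cov(h_1) \cup \Cov(h_2)$, if every member comes from $h_1$, then the intersection is a common face and contractible. Otherwise, pick a member $\sigma_2 \in \Cov(h_2)$ and work inside $\tau_1(\sigma_2)$: for each other member $\alpha$ of the sub-collection, $\alpha \cap \tau_1(\sigma_2)$ is a closed convex subset of $\tau_1(\sigma_2)$ (a common face in $h_1$ when $\alpha \in \Cov(h_1)$, or a simplex of $h_2$ sitting inside $\tau_1(\sigma_2)$ when $\alpha \in \Cov(h_2)$, using that $\alpha \cap \tau_1(\sigma_2)$ is the intersection of two convex sets). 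The full intersection, being an intersection of closed convex sets in the ambient closed simplex $\tau_1(\sigma_2)$, is either empty or convex and hence contractible.

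Once the good-cover property is in hand, Proposition~\ref{prop:nerve} supplies the canonical isomorphism $\Psi_{\Cov(h_1) \cup \Cov(h_2)}$, which is the top horizontal arrow. The diagonal arrow $\Psi_{\Cov(h_1)}$ is already known to be an isomorphism by the corollary of Proposition~\ref{prop:nerve}. For the vertical inclusion-induced map I would invoke Proposition~\ref{prop:refinement} with $C' = \Cov(h_1) \subset \Cov(h_1) \cup \Cov(h_2) = C$, both now verified to be good, concluding that the induced map on nerve homology is an isomorphism.

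Commutativity of the triangle reduces to naturality of $\Psi_C$ with respect to inclusions of good covers; indeed this naturality is exactly the content of the commutative triangle used inside the proof of Proposition~\ref{prop:refinement}, so this step adds no new ingredient. The main obstacle is the good-cover verification: the refinement hypothesis must be leveraged to reduce arbitrary finite intersections to intersections of convex pieces inside a single ambient simplex of $h_1$. Once that reduction is secured, everything else is a formal chaining of the previously established nerve-theoretic results.
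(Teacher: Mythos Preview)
Your overall route---reduce everything to Propositions~\ref{prop:nerve} and \ref{prop:refinement}---is exactly what the paper does; its proof is literally the single sentence ``Follows from Propositions~\ref{prop:nerve} and \ref{prop:refinement}.'' So at the level of strategy you are aligned with the paper, and in fact you supply more detail than it does.

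Two remarks are worth making. First, note that in this paper a ``good cover'' is \emph{defined} (Definition~\ref{def:nerve-cover}) merely by requiring each individual member to be semi-algebraically contractible, not that all finite intersections be contractible. Under that definition the good-cover claim for $\Cov(h_1)\cup\Cov(h_2)$ is immediate, since every member is the homeomorphic image of a closed simplex. Your longer verification of the intersection condition is therefore not needed for the paper's internal logic (whatever one thinks of that convention vis-\`a-vis the classical Nerve Lemma).

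Second, if you do want to carry out the stronger intersection argument, your convexity step has a gap: the elements of $\Cov(h_i)$ are images of closed simplices under the semi-algebraic homeomorphisms $h_i$, and nothing in the definition of ``refinement'' here forces $h_1^{-1}(h_2(|\sigma|))$ to be convex (or even linear) inside the ambient simplex $\tau_1(\sigma)$ of $|\Delta_1|$. So ``intersection of closed convex sets in $\tau_1(\sigma_2)$'' is not justified as stated. One would need either an additional linearity/compatibility hypothesis on the refinement, or a different argument (e.g.\ via a common subdivision) to conclude contractibility of the mixed intersections.
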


\begin{proof}
    Follows from Propositions~\ref{prop:nerve} and \ref{prop:refinement}.
\end{proof}

\subsection{Algorithm for constructing $\mathbf{P}_{S,\mathbf{f},\ell}$ and 
$\mathbf{P}_{S,\mathbf{f},\ell}\!\restriction_{\PP_p}$}
In order to explain the idea behind 
the algorithm better,
we first
describe an algorithm 
whose complexity is bounded
by $(sd)^{2^{O(n)}p^{O(1)}}$ which is doubly exponential -- rather than
$(sd)^{( p n)^{O(\ell)}}$.
The doubly exponential complexity arises from using a semi-algebraic triangulation 
algorithm which has inherently doubly exponential complexity (but is easier to understand). Later we will replace the triangulation by a weaker construction -- namely, parametrized algorithm for simplicial replacement (Algorithm~\ref{alg:poset}) which is sufficient for our purposes and 
has the right complexity, but more difficult to visualize.

\subsubsection{Algorithm for semi-algebraic multi-persistence using effective semi-algebraic triangulation}
We outline the main idea first. While reading the outline 
it will be helpful if the reader also consults the description of the algorithm that follows (Algorithm~\ref{alg:main:triangulation}) 
for the technical details of the various steps.  

\medskip
We begin by obtaining a semi-algebraic partition of $\R^p$, and over each element (say $C$) of the partition, a simplicial complex $K_C$, such that for all $\y \in C$, $S_\y$ is semi-algebraically homeomorphic to $|K_C|$ (we denote by $|\cdot|$ the geometric realization of a simplicial complex as a polyhedron in 
some $\R^N$). This first step uses parametrized version of Algorithm~\ref{alg:triangulation}(Semi-algebraic triangulation) and is implemented in \textbf{Lines~\ref{alg:main:triangulation:1} - \ref{alg:main:triangulation:4}} in Algorithm~\ref{alg:main:triangulation}.

\medskip
We work  with  the $(\ell+1)$-skeleton of the  
nerve complex of the induced closed covering -- and we denote this simplicial complex by $\Delta_C$ (\textbf{Line~\ref{alg:main:triangulation:5}}). There is thus (using Proposition~\ref{prop:nerve}) a canonical isomorphism $\widetilde{\psi}_{C,\y,i}:\HH_i(S_\y) \rightarrow \HH_i(\Delta_C)$ for all $\y \in C $ and $0 \leq i \leq \ell$.

\medskip
For $(\y,\y') \in C \times C' \cap 
(\PP_p^o) \times \PP_p) \cap \Pairs(\PP_p)
$, 
there \emph{does not exist} any canonically defined
homomorphism $\widetilde{\phi}_{C,C',i}: \HH_i(\Delta_{C}) \rightarrow \HH_i(\Delta_{C'})$ such that the following diagram
commutes for every $(\y,\y') \in (C \times C') \cap \ 
(\PP_p^o \times \PP_p) \  \cap \Pairs(\PP_p)
$:
\[
\xymatrix{
\HH_i(S_\y) \ar[rr]^{\iota^{\y,\y'}_i}\ar[d]^{\widetilde{\psi}_{\y,i}}&&
\HH_i(S_{\y'}) \ar[d]^{\widetilde{\psi}_{\y',i}} \\
\HH_i(\Delta_C) \ar[rr]^{\widetilde{\phi}_{C,C',i}}
&& \HH_i(\Delta_{C'})
}
\]

In order to obtain homomorphisms between $\HH_*(C)$ and 
$\HH_*(\Delta_{C'})$ making the above diagram to commute, we further
partition semi-algebraically $C \times C' \ \cap 
(\PP_p^o \times \PP_p) \ \cap \  \Pairs(\PP_p)
$. Our tool is again the parametrized version of Algorithm~\ref{alg:triangulation}. Using it we obtain parametrically 
with $(\y,\y') \in C \times C' \cap \ 
(\PP_p^o \times \PP_p) \  \cap \  \Pairs(\PP_p)
$ as parameter, refinement of the triangulations $K_C,K_{C'}$. This leads to a further partition of $C \times C' \ \cap \ 
(\PP_p^o \times \PP_p) \ \cap \ \Pairs(\PP_p)
$, so that in each part $D$ of this partition, and 
$(\y,\y') \in D$, we have a triangulation of $S_{\y'}$ which is a refinement of the triangulation of $C'$ obtained in the previous step, and also its restriction to $C$ is a refinement of the triangulation obtained in the previous step. Denoting the $(\ell+1)$-skeleton of the nerve complex of the cover induced by this new triangulation $\Delta_D$, we have that both $\Delta_{C}$ and 
$\Delta_{C'}$ are in a natural way sub-complexes of $\Delta_D$,
and there are canonical isomorphisms (using Proposition~\ref{prop:refinement2}),
$\HH_i(S_\y) \rightarrow \HH(\Delta_D^0)$,
$\HH_i(S_{\y'}) \rightarrow \HH(\Delta_D)$ (where $\Delta_D^0 \subset \Delta_D$ is the subcomplex corresponding to the inclusion
$S_\y \subset S_{\y'}$).
This step is implemented in \textbf{Line~\ref{alg:main:triangulation:6}} of
Algorithm~\ref{alg:main:triangulation}.

\medskip
Notice that we also have canonically defined  \emph{isomorphisms}, 
$\HH_i(\Delta_C) \rightarrow \HH_i(\Delta_D^0)$,
$\HH_i(\Delta_{C'}) \rightarrow \HH_i(\Delta_D)$,
induced by the inclusion of a subcomplex in another,
and also a canonically defined homomorphism,
$\HH_i(\Delta_D^0) \rightarrow \HH_i(\Delta_D)$ (also induced by inclusion) that depends on $D$ but is independent of 
the choice of $(\y,\y') \in D$. In short we have, for all $(\y,\y') \in D$, a commutative diagram where all solid arrows are canonically defined:

\begin{equation}
\label{eqn:commutative}
\xymatrix{
\HH_i(S_\y) \ar[rr]^{\iota^{\y,\y'}_i}\ar[d]^{\cong} \ar@/_2pc/[dd]_{\widetilde{\psi}_{\y,i}}&& \HH_i(S_{\y'})\ar[d]_{\cong} \ar@/^2pc/[dd]^{\widetilde{\psi}_{\y',i}}\\
\HH_i(\Delta_D^0) \ar[rr]\ar[d]^{\cong} && \HH_i(\Delta_D) \ar[d]_{\cong} \\
\HH_i(\Delta_C) \ar@{.>}[rr]^{\widetilde{\phi}_{C,C',D,i}}&&\HH_i(\Delta_{C'})
}
\end{equation}

The dotted arrow is then uniquely defined as the one which makes the whole diagram commute -- and we define it to be the homomorphism $\widetilde{\phi}_{C,C',D,i}$
(Line~\ref{alg:main:triangulation:4}).
Notice that for all $\y \preceq \y' \preceq \y''$, 
with $\y \in C, \y' \in C', \y'' \in C''$,
and $(\y,\y') \in D, (\y',\y'') \in D', (\y',\y'') \in D''$
we have the commutative diagram: 
\\\\

\[
\xymatrix{
\HH_i(S_\y) \ar[rr]^{\iota^{\y,\y'}_i}\ar[d]^{\widetilde{\psi}_{\y}} \ar@/^2pc/[rrrr]^{\iota^{\y,\y''}_i} && \HH_i(S_{\y'}) \ar[rr]^{\iota_{\y',\y'',i}}\ar[d]^{\widetilde{\psi}_{\y'}} && \HH_i(S_{\y''}) \ar[d]^{\widetilde{\psi}_{\y''}} \\
\HH_i(\Delta_C) \ar[rr]^{\widetilde{\phi}_{C,C',D,i}} \ar@/_2pc/[rrrr]^{\widetilde{\phi}_{C,C'',D'',i}}  && \HH_i(\Delta_{C'}) \ar[rr]^{\widetilde{\phi}_{C',C'',D',i}}  && \HH_i(\Delta_{C''}) 
}
\]

\medskip
These are the key steps. The rest of the steps of the algorithm 
(\textbf{Lines~\ref{alg:main:triangulation:7} - \ref{alg:main:triangulation:12}})
consists of using standard algorithms from linear algebra to compute
bases of the various  $\HH_i(\Delta_C)$, $\HH_i(\Delta_{C'})$ 
$\HH_i(\Delta_{D}^0)$, $\HH_i(\Delta_{D})$,
and
the matrices with respect to these bases of the maps between these spaces induced by refinements and inclusions
(the solid arrows in the bottom square of the commutative diagram ~\eqref{eqn:commutative}), 
and finally the matrices of the maps
$\widetilde{\phi}_{C,C',D,i}$ (shown by the dotted arrow in 
\eqref{eqn:commutative}).

The complexity of the above algorithm is dominated by the complexity of the calls to the parametrized version of the semi-algebraic triangulation algorithm. Each such call costs $(s d)^{2^{O(n)}p^{O(1)}}$ -- and which is also asymptotically an upper bound on the total complexity.
Notice that the upper bound on the complexity is doubly exponential in $n$ and independent of $\ell$. 

\medskip
We now describe more formally the algorithm outlined above.

\begin{algorithm}[H]
\caption{(Semi-algebraic multi-persistence using triangulations)}
\label{alg:main:triangulation}
\begin{algorithmic}[1]
\INPUT
\Statex{
As stated in 
Section~\ref{subsec:input-output}.
}
\OUTPUT
\Statex{
As stated in 
Section~\ref{subsec:input-output}.
}

\PROCEDURE

\State{
Denote
\begin{eqnarray*}
\widetilde{S} &=& \{(\x,\y) \in \R^n \times \R^p \mid \x \in S, \mathbf{f}(\x) \preceq \y\}.    
\end{eqnarray*}
}
\label{alg:main:triangulation:1} 

\State{
Let $\widetilde{S} = \RR(\widetilde{\phi})$, where $\widetilde{\phi}$
is a closed formula which is obtained from the closed formulas
describing $S$ and $\mathrm{graph}(\mathbf{f})$ in a straightforward way.
}
\label{alg:main:triangulation:2}

\State{ 
Using the parametrized version of Algorithm~\ref{alg:triangulation}, with $Y = (Y_1,\ldots,Y_p)$ as parameters and the formula 
$\RR(\widetilde{\phi})$ as input to obtain:
\begin{enumerate}[(a)]
    \item a finite set of polynomials 
$\mathcal{C}  \subset \D[Y_1,\ldots,Y_p]$;
    \item for each $C \in \Cc(\mathcal{C})$:
        \begin{enumerate}[(i)]
            \item a simplicial complex $K_C$;
            \item a formula $\Phi_C(Y,\cdot)$, such that for 
                    each $\y \in C$, $\RR(\Phi(\y,\cdot))$ equals the graph 
                    of a semi-algebraic homeomorphism 
                    \[
                        h_{C,\y}: |K_C| \rightarrow \widetilde{S}_\y.
                    \]
        \end{enumerate}
\end{enumerate}
}
\label{alg:main:triangulation:3}

\algstore{myalg}
\end{algorithmic}
\end{algorithm}
 
\begin{algorithm}[H]
\begin{algorithmic}[1]
\algrestore{myalg}

\State{
For each simplex $\tau$ of $K_C$,  denote by          
$\Psi_{C,\tau}(Y,X)$,
the formula obtained in an obvious way from $\Phi_C$, such that for each $\y \in C$,          
$\RR(\Psi_{C,\tau}(\y,X)) = h_{C,\y}(|\tau|)$.
}

\label{alg:main:triangulation:4}

\State{
                    For each $\y \in C, C \in \Cc(\mathcal{C})$, denote
                    \[
                        \Delta_C = \sk_{\ell+1}(\Nerve(\Cov(h_C,\y))),
                    \]
                    noting that this definition
                    depends only on $C$ and independent of the choice of $\y \in C$. 
}
\Comment{Note that, for $\y \in C$, and $0 \leq i \leq \ell$, there is a 
isomorphism (depending only on $h_{C,\y}$)
\[
\widetilde{\psi}_{C,\y,i}: \HH_i(\widetilde{S}_\y) \rightarrow \HH_i(\Delta_C).
\]
}
\label{alg:main:triangulation:5}

\State{ 
For each pair $C,C' \in \Cc(\mathcal{C})$, such that 
\[
C \times C' \cap 
(\PP_p^o \times \PP_p) \cap \Pairs(\PP_p)
\neq \emptyset,
\]
using parametrized version of Algorithm~\ref{alg:triangulation} again, with parameters 
\[
(Y,Y') = (Y_1,\ldots,Y_p,Y_1',\ldots,Y_p'),
\]
and input 
\[
(\Psi_{C,\tau})_{\tau \in K_C} \cup (\Psi_{C',\tau})_{\tau \in K_{C'}},
\]
obtain:

    \begin{enumerate}[(a)]
    \item
        a finite set of polynomials $\mathcal{D}_{C,C'} \subset \D[Y_1,\ldots,Y_p,Y_1',\ldots,Y_p']$,
        (we will also assume without loss of generality, by including them if necessary, that the polynomials $Y_ \pm 1, Y_i'\pm 1, Y_i' - Y_i, 1\leq i \leq p$ 
        belong to $\mathcal{D}_{C,C'}$);

    \item
        for each $E \in \Cc(\mathcal{D}_{C,C'})$ such that 
        \[
            C \times C' \cap 
            (\PP_p^o \times \PP_p) \cap \Pairs(\PP_p)
            \cap E \neq \emptyset,
        \]
        a formula 
        \[
        \Phi_{C,C',E}(Y,Y',\cdot),
        \]
        such that for each $(\y,\y')  \in C\times C' \cap 
        (\PP_p^o \times \PP_p) \cap \Pairs(\PP_p)
        \cap E$,
        \[
        \RR(\Phi_{C,C',E}(\y,\y',\cdot))
        \] 
        equals the graph of a semi-algebraic triangulation 
        \[
            h_{C,C',E,\y,\y'}: |K_{C,C',E}| \rightarrow \widetilde{S}_{\y'}.
        \]
       
        Moreover, $h_{C,C',E,\y,\y'}$ refines the semi-algebraic triangulation $h_{C,\y'}$, 
        and the restriction, $h_{C,C',E,\y,\y'}^0$,
        of $h_{C,C',E,\y,\y'}$ to $h_{C,C',E,\y,\y'}^{-1}(\widetilde{S}_{\y})$,
        is a refinement of the triangulation $h_{C,\y}$.
        \end{enumerate}
        
        \Comment{Note that the simplicial complexes $\Nerve(\Cov(h_{C,C',
        E,\y,\y'}^0)) \subset 
        \Nerve(\Cov(h_{C,C',E,\y,\y'}))$ are independent of the choice of $\y,\y'$.)}
        Denote 
        \[
        \Delta_{C,C',E}^0 = \sk_{\ell+1}(\Nerve(\Cov(h_{C,C',E,\y,\y'}^0))),
        \]
        \[
        \Delta_{C,C',E}^0 = \sk_{\ell+1}(\Nerve(\Cov(h_{C,C',E,\y,\y'}^0))),
        \]
        noting that
        $\Delta_{C,C',D}^0 \subset \Delta_{C,C',D}$.

}
\label{alg:main:triangulation:6}

\State{
Denote 
    \[
    \mathcal{D} = \mathcal{C}(Y) \cup \mathcal{C}(Y') \cup \bigcup_{(C,C')} \mathcal{D}_{C,C'},
    \] 
    where the last union is taken over
    all $(C,C')$ such that $C \times C \cap 
    (\PP_p^o \times \PP_p) \cap \Pairs(\PP_p)
    \neq \emptyset $.

    Note that for each $D \in \Cc(\mathcal{D})$, there
    exists, uniquely defined $C_1(D), \C_2(D) \in \Cc(\mathcal{C})$, and
    $E(D) \in \Cc(\mathcal{D}_{\sigma,\sigma'})$
    such that $D\subset \C_1(D) \times C_2(D) \cap E(D
    )$.
 }
\label{alg:main:triangulation:7}

\algstore{myalg}
\end{algorithmic}
\end{algorithm}
 
\begin{algorithm}[H]
\begin{algorithmic}[1]
\algrestore{myalg}

 \State{
 For each $D \in \Cc(\mathcal{D})$, such that $D \subset 
 (\PP_p^o \times \PP_p) \cap \Pairs(\PP_p)
 $, and 
 let
    \[
    \phi_{D,\ell,0}: \HH_\ell(\Delta_{C_1(D),C_2(D),E(D)}^0)\rightarrow \HH_\ell(\Delta_{C_1(D)}),
    \]
    and
    \[
\phi_{D,\ell,1}: \HH_\ell(\Delta_{C_1(D),C_2(D),E(D)})
\rightarrow \HH_\ell(\Delta_{C_2(D)}),
    \]
 denote the canonical isomorphisms induced by refinement, and 
    \[ \iota_{D,\ell}:\HH_\ell(\Delta_{C_1(D),C_2(D),E(D)}^0)\rightarrow \HH_\ell(\Delta_{C_1(D),C_2(D),E(D)})
    \]
    the homomorphism induced by inclusion.
}
\label{alg:main:triangulation:8}

\State{
    Denote 
    \[
    \widetilde{\phi}_{D,\ell}:=\phi_{D,\ell,1} \circ \iota_{D,\ell} \circ \phi_{D,\ell,0}^{-1}: \HH_\ell(\Delta_{C_1(D))}) \rightarrow \HH_\ell(\Delta_{C_2(D)}).
    \]
}
\label{alg:main:triangulation:9}

\State{
Compute $N_C = \dim \HH_\ell(\Delta_C)$, bases of the finite dimensional vector spaces
\[
\HH_\ell(\Delta_C), C \in \Cc(\mathcal{C}), 
\]
and matrices $\widetilde{M}_D$ with respect to the above bases for the linear maps 
\[
\widetilde{\phi}_{D,\ell},
\]
$D \in \Cc(\mathcal{D})$, such that $D \subset 
\Pairs(\R^p)
$.
}
\label{alg:main:triangulation:10}

\State{Let  $N = \max_{C \in \mathcal{C}} N_C$
and for each $D \in \Cc(\mathcal{D})$, such that $D \subset 
\Pairs(\R^p)$, set $M_D \in \kk^{N \times N}$ by padding $\widetilde{M}_D$
with $0$'s.
}
\label{alg:main:triangulation:11}
\State{Output:
\begin{enumerate}[(a)]
\item
$N$;
\item $\mathcal{C}$ and the tuple $(N_C)_{C \in \mathcal{C}}$;
\item 
$\mathcal{D}$ and the tuple $(M_D)_{D \in \Cc(\mathcal{D}), D \subset 
\Pairs(\R^p)}$.
\end{enumerate}
}
\label{alg:main:triangulation:12}
\COMPLEXITY
{
The complexity of the algorithm is bounded by 
\[
(s d)^{2^{O(n)}p^{O(1)}}.
\]
Moreover, the cardinalities and the degrees of the polynomials in them, of $\mathcal{C},\mathcal{D}$, are all bounded by
\[
(s d)^{2^{O(n)} p^{O(1)}}.
\]
}
\end{algorithmic}
\end{algorithm}

\begin{proposition}
    The output of Algorithm~\ref{alg:main:triangulation} satisfy Property~\ref{property:input-output}. 
    The complexity of Algorithm~\ref{alg:main:triangulation} is bounded by $(sd)^{2^{O(n)}p^{O(1)}}$,
    where $s = \card(\mathcal{P}) + \card(\mathcal{Q})$, and 
    $d = \max(\deg(\mathcal{P}),\deg(\mathcal{Q}))$.
\end{proposition}

\begin{proof}
    The required property of the output follow from the commutativity of the diagram ~\eqref{eqn:commutative}, which in turn follows from the definitions of the homomorphisms $\widetilde{\psi}_{\y,\ell}$ and 
    $\widetilde{\phi}_{C,C',D,\ell}$ and the correctness of the parametrized version of Algorithm~\ref{alg:triangulation}.

    The complexity bound follows from the complexity of Algorithm~\ref{alg:triangulation} given in \eqref{eqn:complexity:triangulation}.
    \end{proof}

\subsection{Proof of Theorem~\ref{thm:main}
}
As noted above the bound on the complexity of Algorithm~\ref{alg:main:triangulation} is doubly exponential in $n$ 
(and independent of $\ell$). We will now describe a more refined version
of Algorithm~\ref{alg:main:triangulation} whose complexity for any fixed $\ell$ is only singly exponential in $n$.

\medskip
The main new ingredient is to use a different algorithm -- namely, a parametrized version of the \emph{simplicial replacement algorithm} which is described in \cite{Basu-Karisani-1} instead of the parametrized version of the semi-algebraic triangulation algorithm. The main advantage of using the simplicial replacement algorithm is that its complexity for any fixed $\ell$ is only singly exponential in $n$ (unlike the semi-algebraic triangulation algorithm).

\medskip
In order to apply the simplicial replacement algorithm we first rely on another algorithm -- namely 
an algorithm for computing a cover by contractible semi-algebraic sets described in  \cite{Basu-Karisani-1} (Algorithm 1).

\subsubsection{Parametrized algorithm for computing cover by contractible sets}
To a first approximation this algorithm takes as input the description of a closed and bounded semi-algebraic set and produces as output 
descriptions of closed semi-algebraic sets, whose union is the given set and each of whom is semi-algebraically contractible. 
However, for technical reasons (explained in detail in \cite{Basu-Karisani-1}) the algorithm only succeeds in producing a cover by
semi-algebraically contractible sets of an infinitesimally larger
set than the given one -- which nonetheless has the same homotopy type.

\medskip
In order to describe more precisely the output of this algorithm because of this 
complication we need a technical detour. 

\medskip
\paragraph{\emph{Real closed extensions and Puiseux series}.}
We will need some
properties of Puiseux series with coefficients in a real closed field. We
refer the reader to \cite{BPRbook2} for further details.

\begin{notation}
  For $\R$ a real closed field we denote by $\R \left\langle \eps
  \right\rangle$ the real closed field of algebraic Puiseux series in $\eps$
  with coefficients in $\R$. We use the notation $\R \left\langle \eps_{1},
  \ldots, \eps_{m} \right\rangle$ to denote the real closed field $\R
  \left\langle \eps_{1} \right\rangle \left\langle \eps_{2} \right\rangle
  \cdots \left\langle \eps_{m} \right\rangle$. Note that in the unique
  ordering of the field $\R \left\langle \eps_{1}, \ldots, \eps_{m}
  \right\rangle$, $0< \eps_{m} \ll \eps_{m-1} \ll \cdots \ll \eps_{1} \ll 1$.
\end{notation}

\begin{notation}
\label{not:lim}
  For elements $x \in \R \left\langle \eps \right\rangle$ which are bounded
  over $\R$ we denote by $\lim_{\eps}  x$ to be the image in $\R$ under the
  usual map that sets $\eps$ to $0$ in the Puiseux series $x$.
\end{notation}

\begin{notation}
\label{not:extension}
  If $\R'$ is a real closed extension of a real closed field $\R$, and $S
  \subset \R^{k}$ is a semi-algebraic set defined by a first-order formula
  with coefficients in $\R$, then we will denote by $\E(S, \R') \subset \R'^{k}$ the semi-algebraic subset of $\R'^{k}$ defined by
  the same formula.
 It is well known that $\E(S, \R')$ does
  not depend on the choice of the formula defining $S$ 
  \cite[Proposition 2.87]{BPRbook2}.
\end{notation}

\begin{notation}
\label{not:monotone}
Suppose $\R$ is a real closed field,
and let $X \subset \R^k$ be a closed and bounded  semi-algebraic subset, and $X^+ \subset \R\la\eps\ra^k$
be a semi-algebraic subset bounded over $\R$. 
Let for $t \in \R, t >0$, $\widetilde{X}^+_{t} \subset \R^k$ denote the semi-algebraic
subset obtained by replacing $\eps$ in the formula defining $X^+$ by $t$, and it is
clear that  for $0 < t \ll 1$, $\widetilde{X}^+_t$ does not depend on the formula chosen. We say that $X^+$ is \emph{monotonically decreasing to $X$}, and denote $X^+ \searrow X$ if the following conditions are satisfied.
\begin{enumerate}[(a)]
\item
for all $0 < t < t'  \ll 1$,  $\widetilde{X}^+_{t} \subset  \widetilde{X}^+_{t'}$;
\item
\[
\bigcap_{t > 0} \widetilde{X}^+_{t} = X;
\]
or equivalently $\lim_\eps X^+ =  X$.
\end{enumerate}
More generally,
if $X \subset \R^k$ be a closed and bounded  semi-algebraic subset, and $X^+ \subset \R\la\eps_1,\ldots,\eps_m\ra^k$
a semi-algebraic subset bounded over $\R$,
we will say $X^+ \searrow X$ if and only if 
\[
X^+_{m+1} = X^+ \searrow X^+_m,  \;X^+_m \searrow X^+_{m-1}, \ldots, X^+_{2} \searrow X^+_1 = X,
\]
where for $i=1,\ldots, m$, $X^+_i = \lim_{\eps_i} X^+_{i+1}$. 
\end{notation}

The unparametrized version of the algorithm for computing a cover by contractible sets, takes as input a closed formula $\phi$, such that $\RR(\phi)$ is bounded, and
produces as output a tuple $\Phi = (\phi_1,\ldots,\phi_M)$ of closed formulas with coefficients in $\D[\bar\eps]$, such that
$\RR(\phi_j) \subset \R\la\bar\eps\ra^n, j \in [1,M]$ are semi-algebraically
contractible, and $\RR(\phi) = \searrow \bigcup_j \RR(\phi_j)$.

\medskip
We list the input and output and complexity of the parametrized version.
The complexity 
follows from an analysis of the complexity of the unparametrized version \cite[Algorithm 16.14]{BPRbook2}, and also a careful
analysis of \cite[Algorithm 16.5 (Parametrized bounded connecting)]{BPRbook2} that it  relies on, which in turn relies on the 
$\BElim_X$ operator whose properties were discussed earlier (Example~\ref{eg:BElim}). 

\begin{algorithm}[H]
\caption{(Parametrized algorithm for computing cover by contractible sets  )}
\label{alg:cover}
\begin{algorithmic}[1]
\INPUT
\Statex{
\begin{enumerate}[(a)]
\item
An element $R \in \D, R>0$;
\item 
a finite set $\mathcal{P} \subset \D[X_1,\ldots,X_n,Y_1,\ldots,Y_p]_{\leq d}$;
\item
a $\mathcal{P}$-closed formula $\phi(X_1,\ldots,X_n,Y_1,\ldots,Y_p)$, such that for each 
$\y \in \R^p$, $\RR(\phi(X,\y)) \subset \overline{B_n(0,R)}$.
\end{enumerate}
}
\OUTPUT
\Statex{
\begin{enumerate}[(a)]
\item a finite subset $\mathcal{F} \subset \D[Y_1,\ldots,Y_p]$;
    \item for each realizable sign condition $\sigma \in \{0,1,-1\}^{\mathcal{F}}$, 
    a tuple $\Phi_\sigma = (\phi_{\sigma,1},\ldots,\phi_{\sigma, M_\sigma})$, 
and where each $\phi_{\sigma,j} = \phi_{\sigma,j}(X_1,\ldots,X_n,Y_1,\ldots,Y_p)$ is a closed formulas with free variables $X_1,\ldots,X_n,Y_1,\ldots,Y_p$, 
with coefficients in $\D[\bar\eps]$, such that
for each $\y \in \RR(\sigma)$,
    and $j \in J$, $\RR(\phi_{\sigma,j}(X_1,\ldots,X_n,\y))$ is semi-algebraically contractible, 
    and $\RR(\phi_\sigma) = \searrow \bigcup_j \RR(\phi_{\sigma,j})$.
\end{enumerate}
}
\COMPLEXITY
The complexity of the algorithm is bounded by $(s d)^{ (p n)^{O(1)} } $,
where $s = \card(\mathcal{P})$.
\end{algorithmic}
\end{algorithm}

\subsubsection{Parametrized simplicial replacement algorithm}
In order to describe the input and output of the simplicial replacement algorithm we need a few preliminary definitions.
In the following we will restrict ourselves to the category of 
closed and bounded semi-algebraic sets and semi-algebraic continuous maps between them.

\begin{definition}[Semi-algebraic homological $\ell$-equivalences]
\label{def:equivalence-spaces}
We say that a semi-algebraic continuous map $f:X \rightarrow Y$ between two closed and bounded semi-algebraic sets is a semi-algebraic homological $\ell$-equivalence,
if the induced homomorphisms between the homology groups $f_*:\HH_i(X) \rightarrow \HH_i(Y)$ are isomorphisms for $0 \leq i \leq \ell$. 
\end{definition}

The relation of semi-algebraic homological $\ell$-equivalence as defined above is not an equivalence relation since it is not 
symmetric. In order to make it symmetric one needs to ``formally invert'' semi-algebraic homological $\ell$-equivalences.

\begin{definition}[Semi-algebraically homologically $\ell$-equivalent]
\label{def:ell-equivalent}
We will say that  \emph{$X$ is semi-algebraically homologically $\ell$-equivalent to $Y$}  (denoted $X \sim_\ell Y$), if and only if there exists 
closed and bounded semi-algebraic sets, $X=X_0,X_1,\ldots,X_n=Y$ and semi-algebraic homological $\ell$-equivalences  $f_1,\ldots,f_{n}$ as shown below:
\[
\xymatrix{
&X_1 \ar[ld]_{f_1}\ar[rd]^{f_2} &&X_3\ar[ld]_{f_3} \ar[rd]^{f_4}& \cdots&\cdots&X_{n-1}\ar[ld]_{f_{n-1}}\ar[rd]^{f_{n}} & \\
X_0 &&X_2  && \cdots&\cdots &&  X_n&
}.
\]
It is clear that $\sim_\ell$ is an equivalence relation.
\end{definition}

We now extend Definition~\ref{def:equivalence-spaces} to semi-algebraic continuous maps 
between closed and bounded semi-algebraic  sets.

\begin{definition}[Homological $\ell$-equivalence between semi-algebraic maps]
\label{def:equivalence-diagrams}
Let $f_1:X_1 \rightarrow Y_1, f_2:X_2 \rightarrow Y_2$ be continuous semi-algebraic maps between closed and bounded semi-allgebraic sets. A semi-algebraic homological $\ell$-equivalence 
from $f_1$ to $f_2$ is then a pair $\phi = (\phi^{(1)},\phi^{(2)})$ where $\phi^{(1)}:X_1 \rightarrow X_2, \phi^{(2)}:Y_1 \rightarrow Y_2$ are semi-algebraic homological $\ell$-equivalences, and such that
$f_2 \circ \phi_1 = \phi_2 \circ f_1$.

\medskip
We will say that  a semi-algebraic map $f$ is \emph{semi-algebraically homologically $\ell$-equivalent} to a semi-algebraic map $g$ (denoted as before by $f \sim_\ell g$), if and only if there exists semi-algebraic continuous maps
 $f=f_0,f_1,\ldots,f_n=g$ between closed and bounded semi-algebraic sets, and semi-algebraic homological $\ell$-equivalences  $\phi_1,\ldots,\phi_{n}$ as shown below:
\[
\xymatrix{
&f_1 \ar[ld]_{\phi_1}\ar[rd]^{\phi_2} &&f_3\ar[ld]_{\phi_3} \ar[rd]^{\phi_4}& \cdots&\cdots&f_{n-1}\ar[ld]_{\phi_{n-1}}\ar[rd]^{\phi_{n}} & \\
f_0 &&f_2  && \cdots&\cdots &&  f_n&
}.
\]
It is clear that $\sim_\ell$ is an equivalence relation.
\end{definition}

\begin{definition}[Diagrams of closed and bounded semi-algebraic sets]
\label{def:diagram-of-spaces}
A diagram of closed and bounded semi-algebraic sets is a functor, $X:J \rightarrow \SAcat$, from a small category $J$ to the category of closed and bounded semi-algebraic sets and continuous semi-algebraic maps between them.
\end{definition}

We extend Definition~\ref{def:equivalence-spaces} to diagrams of closed and bounded semi-algebraic sets.
We denote by $\SAcat_\R$ the category of closed and bounded semi-algebraic subsets of $\R^n, n >0$  and continuous semi-algebraic maps between them.

\begin{definition}[Semi-algebraic homological $\ell$-equivalence between diagrams of closed and bounded semi-algebraic sets]
\label{equivalence-diagrams}
Let $J$ be a small category, and $X,Y: J \rightarrow \SAcat_\R$ be two functors. We say a natural transformation $f:X \rightarrow Y$ is an semi-algebraic homological $\ell$-equivalence, if
the induced maps, 
\[
f(j)_*: \HH_i(X(j)) \rightarrow \HH_i(Y(j))
\]
are isomorphisms for all $j \in J$ and $0 \leq i \leq \ell$.

We will say that  \emph{a diagram $X:J \rightarrow \SAcat_\R$ is $\ell$-equivalent to the diagram $Y:J \rightarrow \SAcat_\R$}  (denoted as before by $X \sim_\ell Y$), if and only if there exists diagrams
 $X=X_0,X_1,\ldots,X_n=Y:J \rightarrow \SAcat_\R$ and semi-algebraic homological $\ell$-equivalences  $f_1,\ldots,f_{n}$ as shown below:
\[
\xymatrix{
&X_1 \ar[ld]_{f_1}\ar[rd]^{f_2} &&X_3\ar[ld]_{f_3} \ar[rd]^{f_4}& \cdots&\cdots&X_{n-1}\ar[ld]_{f_{n-1}}\ar[rd]^{f_{n}} & \\
X_0 &&X_2  && \cdots&\cdots &&  X_n&
}.
\]
It is clear that $\sim_\ell$ is an equivalence relation.
\end{definition}

One particular diagram will be important in what follows.

\begin{notation} [Diagram of various unions of a finite number of subspaces]
\label{not:diagram-Delta}
Let $J$ be a finite set, $A$ a closed and bounded semi-algebraic set, 
and $\mathcal{A} = (A_j)_{j \in J}$ a tuple of closed and bounded semi-algebraic subsets of $A$  indexed by $J$.

For any subset 
$J' \subset J$,
we denote 
\begin{eqnarray*}
\mathcal{A}^{J'} &=& \bigcup_{j' \in J'} A_{j'}, \\
\mathcal{A}_{J'} &=& \bigcap_{j' \in J'} A_{j'}, \\
\end{eqnarray*}

We consider $2^J$ as a category whose objects are elements of $2^J$, and whose only morphisms 
are given by: 
\begin{eqnarray*}
2^J(J',J'') &=& \emptyset  \mbox{ if  } J' \not\subset J'', \\
2^J(J',J'') &=& \{\iota_{J',J''}\} \mbox{  if } J' \subset J''.
\end{eqnarray*} 
We denote by $\Simp^J(\mathcal{A}):2^J \rightarrow \SAcat_\R$ the functor (or the diagram) defined by
\[
\Simp^J(\mathcal{A})(J') = \mathcal{A}^{J'}, J' \in 2^J,
\]
and
$\Simp^J(\mathcal{A})(\iota_{J',J''})$ is the inclusion map $\mathcal{A}^{J'} \hookrightarrow \mathcal{A}^{J''}$.
\end{notation}

\subsubsection{Parametrized algorithm for simplicial replacement}
The original  (i.e. unparametrized) algorithm takes as input a tuple
of closed formulas, $\Phi = (\phi_1,\ldots,\phi_M)$, such that the
realizations, $\RR(\phi_j) \subset \R^n, j \in J = [0,M]$ are all semi-algebraically contractible, and $m \geq 0$. It produces as output 
a simplicial complex $\Delta = \Delta^J$, having a subcomplex $\Delta^{J'}$ for each $J' \subset J$, with $\Delta^{J'} \subset \Delta^{J''}$ whenever
$J' \subset J'' \subset J$, and such that the diagram of inclusions
\[
(|\Delta^{J'}| \hookrightarrow |\Delta^{J''}|)_{J' \subset J'' \subset J}
\]
is homologically $m$-equivalent to the diagram of inclusions
\[
(\RR(\Phi^{J'}) \hookrightarrow \RR(\Phi^{J''}))_{J' \subset J'' \subset J}
\]
where for $J ' \subset J$, $\Phi^{J'} = \bigvee_{j \in J'} \phi_j$.

\medskip
We will need a \emph{parametrized version} of the above algorithm
which we will call the parametrized algorithm for simplicial replacement.
We describe the input, output and the complexity of this algorithm
below.

\medskip
The complexity of the parametrized version follows from analysing the 
complexity analysis of the unparametrized version in \cite{Basu-Karisani-1},
which has a recursive structure of depth $O(\ell)$. At each level of the recursion, there are calls to (the unparametrized version of) Algorithm~\ref{alg:cover}, 
on certain intersections of sets computed in the previous steps of the 
recursion. The complexity bound of the parametrized version now follows using the complexity bound, $(sd)^{( p n)^{O(1)}}$,
of Algorithm~\ref{alg:cover} (parametrized algorithm for computing cover by contractible sets), 
noting that the depth of the recursion in Algorithm~\ref{alg:poset}
is $O(\ell)$,
instead of using that of the  
the unparametrized version Algorithm~\ref{alg:cover} as is done in \cite{Basu-Karisani-1}. We omit the details since they are quite tedious.

\begin{algorithm}[H]
\caption{(Parametrized algorithm for simplicial replacement)}
\label{alg:poset}
\begin{algorithmic}[1]
\INPUT
\Statex{
\begin{enumerate}[(a)]
\item 
$\ell \geq 0$;
\item
An element $R \in \D, R>0$;
\item 
a finite set $\mathcal{P} \subset \D[X_1,\ldots,X_n,Y_1,\ldots,Y_p]_{\leq d}$;
\item
For each $j, 1 \leq j \leq M$, a 
$\mathcal{P}$-formula $\phi_j$, 
such that $\RR(\phi_j, \overline{B_k(0,R)})$ is 
closed and bounded, and 
semi-algebraically contractible.
\end{enumerate}
}
\OUTPUT
\Statex{
\begin{enumerate}[(a)]
\item a finite subset $\mathcal{F} \subset \D[Y_1,\ldots,Y_p]$;
\item for each $C \in \Cc(\mathcal{F})$, 
A simplicial complex $\Delta_C$, and subcomplexes $\Delta_{C, J'}, J' \subset J = [1,M]$, such that 
for each $J' \subset J'' \subset J$, $\Delta_{\sigma,J'} \subset \Delta_{C,J''}$
and the diagram of inclusions
$|\Delta_{C,J'}| \hookrightarrow |\Delta_{C,J''}|, J' \subset J'' \subset J$
is semi-algebraically homologically $\ell$-equivalent to
$\Simp^J(\RR(\Phi))$,
where $\Phi$ is defined by $\Phi(j) = \phi_j, j \in [1,M]$.
\end{enumerate}
}
\COMPLEXITY
The complexity of the algorithm is bounded by $(s d)^{n^{O(\ell)}p^{O(1)}} $,
where $s = \card(\mathcal{P})$.

\end{algorithmic}
\end{algorithm}

We now return to the proof of Theorem~\ref{thm:main}. 
We will prove the theorem by describing an algorithm with input and output as specified by the theorem and then proving its correctness and
upper bounds on the complexity. 
The following  algorithm will avoid using the semi-algebraic triangulation algorithm (and its inherently doubly exponential complexity). Instead,
we will use the parametrized algorithm for simplicial replacement 
(Algorithm~\ref{alg:poset}) described above.

\begin{algorithm}[H]
\caption{(Semi-algebraic multi-persistence using simplicial replacement)}
\label{alg:main:ss}
\begin{algorithmic}[1]
\INPUT
\Statex{
As stated in 
Section~\ref{subsec:input-output}.
}
\OUTPUT
\Statex{
As stated in 
Section~\ref{subsec:input-output}.
}

\PROCEDURE

\State{
Denote
\begin{eqnarray*}
\widetilde{S} &=& \{(\x,\y) \in \R^n \times \R^p \mid \x \in S, \mathbf{f}(\x) \preceq \y\}.    
\end{eqnarray*}
}

\State{
Let $\widetilde{S} = \RR(\widetilde{\phi})$, where $\widetilde{\phi}$
is a closed formula which is obtained from the closed formulas
describing $S$ and $\mathrm{graph}(\mathbf{f})$ in a straightforward way.
}

\State{
Treating $Y_1,\ldots,Y_p$ as parameters, use the 
parametrized algorithm for computing contractible covers using the
formula $\widetilde{\phi}$ as input to obtain the finite  set $\mathcal{F} \subset \D[Y_1,\ldots,Y_p]$ and the tuples $\Phi_C, 
C \in \Cc(\mathcal{F})$.
}

\algstore{myalg}
\end{algorithmic}
\end{algorithm}
 
\begin{algorithm}[H]
\begin{algorithmic}[1]
\algrestore{myalg}

\State{
Now use $(\mathcal{F}, (\Phi_C)_{C \in \Cc(\mathcal{F}}))$ as input to 
Algorithm~\ref{alg:poset} (parametrized algorithm for simplicial replacement)
to obtain a finite set $\mathcal{G} \subset \D[Y_1,\ldots,Y_p]$ and 
for each 
$C \in \Cc(\mathcal{F} \cup \mathcal{G})$
     a simplicial complex 
    $\Delta_C$ with property described earlier. 
    Let $\mathcal{C} = \mathcal{F} \cup \mathcal{G}$.
}

\State{
For each $C,C' \in \Cc(\mathcal{C})$, we obtain a semi-algebraic partition of $C \times C'  \cap 
(\PP_p^o \times \PP_p) \cap \Pairs(\PP_p)
$ as follows. Call Algorithm~\ref{alg:poset}(parametrized algorithm for simplicial replacement)
with input $\mathcal{C}(Y) \cup \mathcal{C}(Y'), (\Phi_C(X,Y) \mid \Phi_{C'}(X,Y')))$
with parameters $Y,Y'$, and where $\mid$ denotes concatenation. Let $\mathcal{D}_{C,C'} \subset \D[Y,Y']$
denote the finite set of polynomials in the output,
and for each $E \in \Cc(\mathcal{D}_{C,C'})$,
a simplicial complex $\Delta_{C,C',E}$,
and observe that $\Delta_C,\Delta_{C'}$ are both subcomplexes of the simplicial complex $\Delta'_{C,C',E}$. Moreover, there is a subcomplex $\Delta^0_{C,C',E}$ containing
$\Delta_C$ as a subcomplex, such that the inclusion
$|\Delta_C| \rightarrow |\Delta^0_{C,C',E}|$
is a homological $\ell$-equivalence.
}

\State{
Denote 
    \[
    \mathcal{D} = \mathcal{C}(Y) \cup \mathcal{C}(Y') \cup \bigcup_{(C,C')} \mathcal{D}_{C,C'},
    \] 
    where the last union is taken over
    all $(C,C') \in \Cc(\mathcal{C}) \times \Cc(\mathcal{C}')$ such that $C \times C' \cap 
    \Pairs(\R^p) 
    \neq \emptyset $.

    Note that for each $D \in \Cc(\mathcal{D})$, there
    exists uniquely defined $C_1(D) \in \Cc(\mathcal{C}),
    C_2(D) \in \Cc(\mathcal{C})$, and
    $E(D) \in \Cc(\mathcal{D}_{C,C'})$
    such that $D \subset C_1(D)) \times C_2(D)) \cap E(D)$.
 }
 
 \State{
 For each $D \in \Cc(\mathcal{D})$, such that $D \subset 
 (\PP_p^o \times \PP_p) \cap \Pairs(\PP_p)
 $, and 
 let
    \[
    \phi_{D,\ell,0}: \HH_i(\Delta_{C_1(D),C_2(D),E(D)}^0)\rightarrow \HH_\ell(\Delta_{C_1(D)}),
    \]
    and
    \[
\phi_{D,\ell,1}: \HH_\ell(\Delta_{C_1(D),C_2(D),E(D)})
\rightarrow \HH_\ell(\Delta_{C_2(D)}),
    \]
 denote the canonical isomorphisms induced by refinement, and 
    \[ \iota_{D,\ell}:\HH_\ell(\Delta_{C_1(D),C_2(D),E(D)}^0)\rightarrow \HH_\ell(\Delta_{C_1(D),C_2(D),E(D)})
    \]
    the homomorphism induced by inclusion.
}

\State{
    Denote 
    \[
    \widetilde{\phi}_{D,\ell}:=\phi_{D,\ell,1} \circ \iota_{D,\ell} \circ \phi_{D,\ell,0}^{-1}: \HH_\ell(\Delta_{C_1(D))}) \rightarrow \HH_\ell(\Delta_{C_2(D)}).
    \]
}
\label{alg:main:ss:4}

\State{
Compute $N_C = \dim \HH_\ell(\Delta_C)$, bases of the finite dimensional vector spaces
\[
\HH_\ell(\Delta_C), C \in \Cc(\mathcal{C}), 
\]
and matrices $\widetilde{M}_D$ with respect to the above bases for the linear maps 
\[
\widetilde{\phi}_{D,\ell},
\]
$D \in \Cc(\mathcal{D})$, such that $D \subset 
\Pairs(\R^p)
$.
}
\State{Output $N = \max_{C \in \mathcal{C}} N_C$
and for each $D \in \Cc(\mathcal{D})$, such that $D \subset 
\Pairs(\R^p)$, set $M_D \in \kk^{N \times N}$ by padding $\widetilde{M}_D$
with $0$'s.
}

\State{Output:
\begin{enumerate}[(a)]
\item
$N$;
\item $\mathcal{C}$ and the tuple $(N_C)_{C \in \mathcal{C}}$;
\item 
$\mathcal{D}$ and the tuple $(M_D)_{D \in \Cc(\mathcal{D}), D \subset 
\Pairs(\R^p)}$.
\end{enumerate}
}

\algstore{myalg}
\end{algorithmic}
\end{algorithm}
 
\begin{algorithm}[H]
\begin{algorithmic}[1]
\algrestore{myalg}

\COMPLEXITY{
The complexity of the algorithm is bounded by 
\[
(s d)^{n^{O(\ell)}p^{O(1)}}.
\]
Moreover, 
\[
\card(\mathcal{C}),\card(\mathcal{D}),\deg(\mathcal{C}),\deg(\mathcal{D})
\]
are all bounded by
\[
(s d)^{n^{O(\ell)}p^{O(1)}}.
\]
}
\end{algorithmic}
\end{algorithm}

\begin{proof}[Proof of correctness and complexity analysis of Algorithm~\ref{alg:main:ss}]
The correctness of the algorithm follows from the correctness of 
Algorithms~\ref{alg:cover} and \ref{alg:poset},
and the same arguments as in the proof correctness of Algorithm~\ref{alg:main:triangulation}.

The complexity bound follows from the complexities of Algorithms~\ref{alg:cover} and \ref{alg:poset}.
\end{proof}

\begin{proof}[Proof of Theorem~\ref{thm:main}]
Follows from the proof of correctness and complexity analysis of Algorithm~\ref{alg:main:ss}.
\end{proof}

\subsection{Proofs of Theorems~\ref{thm:main:mu} and \ref{thm:alg:main:mu}}
As mentioned previously (see Remark~\ref{rem:finite}), the key to effective constructiblity of the function $\mu_\ell(S,\mathbf{f})$ is the fact
for each $(\mathbf s, \mathbf t) \in \PP_p^o \times \PP_p$,
In Definition~\ref{def:persistent}, 
only finitely many distinct subspaces occur in the definition of
the subspaces $M^{\mathbf s, \mathbf t}(\mathbf P_{S,\mathbf{f},\ell}\!\restriction_{\PP_p})$ and $N^{\mathbf s,\mathbf t}(\mathbf P_{S,\mathbf{f},\ell}\!\restriction_{\PP_p})$. 
With this in mind we make the following definition.

\begin{definition}
\label{def:alg:mu}
Let $\mathcal D\subset \R[Y_1,\ldots,Y_p,Y'_1,\ldots,Y'_p]$ be finite.
\begin{enumerate}[(a)]
\item 
For $(\mathbf s,\mathbf t)\in (\PP_p^{o}\times \PP_p^{o})\cap \Pairs(\PP_p)$ we define
\[
\mathcal M^{\mathbf s,\mathbf t}(\mathcal D)
=\Bigl\{\,D\in \Cc(\mathcal D)\ \Bigm|\ \exists\, \mathbf s'\in \PP_p \text{ with } \mathbf s'\prec \mathbf s \text{ and } (\mathbf s',\mathbf t)\in D \Bigr\},
\]
and
$$
\displaylines{
\mathcal N^{\mathbf s,\mathbf t}(\mathcal D)
=
\Bigl\{\, (D_1,D_2)\in \Cc(\mathcal D)^2\ \Bigm|\ 
\exists\, \mathbf s',\mathbf t'\in \PP_p \text{ with } \mathbf s'\prec \mathbf s\preceq \mathbf t'\prec \mathbf t,\cr
(\mathbf s',\mathbf s)\in D_1,\ (\mathbf s,\mathbf t')\in D_2
\Bigr\}.
}
$$

\item
For $(\mathbf s,\mathbf t)\in (\PP_p^{o}\times \PP_p^{\max})\cap \Pairs(\PP_p)$ we define
$$
\displaylines{
\mathcal M^{\mathbf s,\mathbf t}(\mathcal D)
=\Bigl\{\, D\in \Cc(\mathcal D)\ \Bigm|\ \exists\ \mathbf t'\in \PP_p^{o}\text{ with } \mathbf s \preceq \mathbf t',\cr
 (\mathbf s,\mathbf t')\in D
\Bigr\}.
}
$$
\end{enumerate}
\end{definition}

\begin{definition}
\label{property:alg:mu}
Let $\mathcal D\subset \R[Y_1,\ldots,Y_p,Y'_1,\ldots,Y'_p]$ be finite. 
We say that a finite set $\mathcal D'\subset \R[Y_1,\ldots,Y_p,Y'_1,\ldots,Y'_p]$ is \emph{compatible with $\mathcal D$} if, for every  $D'\in \Cc(\mathcal D')$, the sets $\mathcal M^{\mathbf s,\mathbf t}(\mathcal D)$ and $\mathcal N^{\mathbf s,\mathbf t}(\mathcal D)$ are constant as $(\mathbf s, \mathbf t)$ ranges over $D'$.
\end{definition}

As mentioned earlier (see Remark~\ref{rem:finite}), the key to the effective constructibility of the barcode function $\mu_\ell(S,\mathbf f)$ is that, although the definitions in Definition~\ref{def:persistent} involve sums indexed by potentially infinite sets, only finitely many \emph{distinct} subspaces actually arise when the poset module comes from a semi-algebraic filtration. Concretely, for each $(\mathbf s,\mathbf t)\in (\PP_p^o\times \PP_p)\cap \Pairs(\PP_p)$, the subspaces
$M^{\mathbf s,\mathbf t}\!\bigl(\mathbf P_{S,\mathbf f,\ell}\!\restriction_{\PP_p}\bigr)$ and
$N^{\mathbf s,\mathbf t}\!\bigl(\mathbf P_{S,\mathbf f,\ell}\!\restriction_{\PP_p}\bigr)$
are determined by finitely many 
parts of a semi-algebraic partition of the set $\Pairs(\PP_p)$ -- namely, any semi-algebraic partition subordinate to the poset module $\mathbf{P}_{S,\mathbf{f},\ell}\!\restriction_{\PP_p}$ which we have shown is 
semi-algebraically constructible (Theorem~\ref{thm:main}).
With this in mind, we introduce the following combinatorial data extracted from a semi-algebraic partition of $\Pairs(\PP_p)$.

\begin{definition}
\label{def:alg:mu}
Let $\mathcal D\subset \R[Y_1,\ldots,Y_p,Y'_1,\ldots,Y'_p]$ be finite and assume that $Y_i\pm 1, Y_i' \pm 1, Y_i - Y_i' \in \mathcal{D}$ for $1 \leq i \leq p$.

\begin{enumerate}[(a)]
\item For $(\mathbf s,\mathbf t)\in (\PP_p^{o}\times \PP_p^{o})\;\cap\; \Pairs(\PP_p)$, define
$$
\displaylines{
\mathcal M^{\mathbf s,\mathbf t}(\mathcal D)
=
\Bigl\{\,D\in \Cc(\mathcal D), D \subset (\PP_p^o \times \PP_p) \cap \Pairs(\PP_p) \ \Bigm|\ \cr
\exists\, \mathbf s'\in \PP_p \text{ with } \mathbf s'\prec \mathbf s \text{ and } (\mathbf s',\mathbf t)\in D \Bigr\},
}
$$
and
$$
\displaylines{
\mathcal N^{\mathbf s,\mathbf t}(\mathcal D)
=
\Bigl\{\, (D_1,D_2)\in \Cc(\mathcal D)^2,\ D_1,D_2 \subset (\PP_p^o \times \PP_p) \cap \Pairs(\PP_p) \Bigm|\ 
\cr
\exists\, \mathbf s',\mathbf t'\in \PP_p \text{ with } \mathbf s'\prec \mathbf s\preceq \mathbf t'\prec \mathbf t,\ 
(\mathbf s',\mathbf s)\in D_1,\ (\mathbf s,\mathbf t')\in D_2
\Bigr\}.
}
$$

\item For $(\mathbf s,\mathbf t)\in (\PP_p^{o}\times \PP_p^{\max})\cap \Pairs(\PP_p)$, define
$$
\displaylines{
\mathcal M^{\mathbf s,\mathbf t}(\mathcal D)
=
\Bigl\{\, D\in \Cc(\mathcal D),\ D \subset (\PP_p^o \times \PP_p)\; \cap \; \Pairs(\PP_p)\  \Bigm|\
\cr
\exists\, \mathbf t'\in \PP_p^{o} \text{ with } \mathbf s\preceq \mathbf t' \prec \mathbf t
\ \text{ and }\ (\mathbf s,\mathbf t')\in D
\Bigr\}.
}
$$
\end{enumerate}
\end{definition}

\noindent\emph{Interpretation.}
Informally, $\mathcal M^{\mathbf s,\mathbf t}(\mathcal D)$ records which elements of $\Cc(\mathcal{D})$ of the partition of $\Pairs(\R^p)$ are hit by pairs of the form $(\mathbf s',\mathbf s)$ (or, in case $\mathbf t\in \PP_p^{\max}$, by pairs of the form $(\mathbf s,\mathbf t')$) with $\mathbf s'\prec \mathbf s$, while $\mathcal N^{\mathbf s,\mathbf t}(\mathcal D)$ records which \emph{ordered pairs} of elements of $\Cc(\mathcal{D})$ 
are hit simultaneously by a ``sandwich'' configuration
\[
\mathbf s'\prec \mathbf s \preceq \mathbf t' \prec \mathbf t,
\qquad
(\mathbf s',\mathbf s)\in D_1,\ (\mathbf s,\mathbf t')\in D_2.
\]

Finally, we want to partition $(\PP_p^o \times \PP_p)\; \cap \; \Pairs(\PP_p)$ so that on each part of the partition, the sets $\mathcal M^{\mathbf s,\mathbf t}(\mathcal D), \mathcal N^{\mathbf s,\mathbf t}(\mathcal D)$ stay constant.

To this end we define first:

\begin{definition}
\label{property:alg:mu}
Let $\mathcal D\subset \R[Y_1,\ldots,Y_p,Y'_1,\ldots,Y'_p]$ be finite.
We say that a finite set $\mathcal D'\subset \R[Y_1,\ldots,Y_p,Y'_1,\ldots,Y'_p]$ is \emph{compatible with $\mathcal D$} if for every $D'\in \Cc(\mathcal D')$, the sets
\[
\mathcal M^{\mathbf s,\mathbf t}(\mathcal D)
\quad\text{and}\quad
\mathcal N^{\mathbf s,\mathbf t}(\mathcal D)
\]
are constant as $(\mathbf s,\mathbf t)$ ranges over $D'$.
\end{definition}

The following proposition will play a key role in the proof of Theorem~\ref{thm:alg:main:mu} later.

\begin{proposition}
\label{prop:alg:main:mu}
     There exists an algorithm that takes as input a finite set
     $\mathcal{D} \subset \D[Y_1,\ldots,Y_p,Y_1',\ldots,Y_p']$, 
     and produces as output a finite set 
     \[
     \mathcal{D}' \subset \D[Y_1,\ldots,Y_p,Y_1'\ldots,Y_p']
     \]
     which is compatible with $\mathcal{D}$.
     Moreover, the complexity of the algorithm and the number and degrees of the polynomials are bounded by $(s d)^{p^{O(1)}}$,
     where $s = \card(\mathcal{D})$ and $d = \max_{P \in \mathcal{D}} \deg(P)$.
\end{proposition}

\begin{proof}
For each $D \in \Cc(\mathcal{D})$, let $\mathcal{P}_0(D)$ be a finite set of polynomials such that $D$ is a $\mathcal{P}_0(D)$-semi-algebraic set.
    Using the method described  in Section~\ref{subsec:enumerating-CC} the set of polynomials $\mathcal{P}_0(D)$,  as well as the $\mathcal{P}_0(D)$-semi-algebraic formula describing $D$, can be computed with complexity bounded by $(s d)^{p^{O(1)}}$, and such that $\card(\mathcal{P}_0(D), \deg(\mathcal{P}_0(D) \leq (s d)^{p^{O(1)}}$.

For $D \in \Cc(\mathcal{D})$, we denote 
\[
\mathcal{M}(D) = \{(\mathbf s, \mathbf t) \in (\PP_p^o \times \PP_p) \;\cap \; \Pairs(\PP_p)\; \mid\; D \in \mathcal{M}^{\mathbf s, \mathbf t}(\mathcal{D}) \}.
\]
Similarly,  for $D_1,D_2 \in \Cc(\mathcal{D})$, we denote 
\[
\mathcal{N}(D_1,D_2) = \{(\mathbf s, \mathbf t) \in (\PP_p^o \times \PP_p) \;\cap \; \Pairs(\PP_p)\; \mid\; (D_1,D_2) \in \mathcal{N}^{\mathbf s, \mathbf t}(\mathcal{D}) \}.
\]

Clearly, $\mathcal{M}(D),\mathcal{N}(D_1,D_2)$ are semi-algebraic sets. Moreover, using an efficient quantifier elimination algorithm (for example \cite[Algorithm 14.5 (Quantifier Elimination)]{BPRbook2} there exists an algorithm for computing semi-algebraic descriptions of these sets,
by some $\mathcal{P}_1(D), \mathcal{Q}_1(D_1,D_2)$-formulas, where 
$\mathcal{P}_1(D), \mathcal{Q}_1(D_1,D_2)$ are finite sets of  polynomials
whose cardinalities and degrees  are also  bounded singly exponentially.
Let 
\[
\mathcal{D}' = \bigcup_{D \in \Cc(\mathcal{D})} \mathcal{P}_1(D) \cup \bigcup_{D_1,D_2 \in \Cc(\mathcal{D})} \mathcal{Q}_1(D_1,D_2).
\]

It follows from the definitions of the sets 
$
\mathcal M^{\mathbf s,\mathbf t}(\mathcal D),
\mathcal N^{\mathbf s,\mathbf t}(\mathcal D)
$,
and also the definition of $\mathcal{D}'$, that $\mathcal{D}'$ is compatible with $\mathcal{D}$. The estimates on the degrees and cardinalities follow from the 
complexity of the quantifier elimination algorithm (\cite[Algorithm 14.5 (Quantifier Elimination)]{BPRbook2}, and the fact that $\card(\Cc(\mathcal{D}))$ is also bounded by $(O(s d))^{2 p}$ (see Theorem~\ref{thm:OPTM} below).
\end{proof}

In the algorithm we describe next we will use the following notation.
\begin{notation}
    For $D' \in \Cc(\mathcal{D}')$ in Proposition~\ref{prop:alg:main:mu} we will denote by 
    $\mathcal{M}^{D'}$ (resp. $\mathcal{N}^{D'}$) the set
    $\mathcal{M}^{\mathbf s, \mathbf t}(\mathcal{D})$ (resp. $\mathcal{N}^{\mathbf s, \mathbf t}(\mathcal{D})$) for some (and so all) $(\mathbf s, \mathbf t)\in D'$.
\end{notation}
We will also use heavily Notation~\ref{not:matrix} (linear maps corresponding to matrices).
\begin{algorithm}[H]
\caption{(Computing barcodes of semi-algebraic multi-filtrations)}
\label{alg:main:ss:mu}
\begin{algorithmic}[1]
\INPUT
\Statex{
As stated in Theorem~\ref{thm:alg:main:mu}.
}
\OUTPUT
\Statex{
As stated in Theorem~\ref{thm:alg:main:mu}.
}

\PROCEDURE
\State{
Using Algorithm~\ref{alg:main:ss} with input $\mathcal{P},\mathcal{Q},\Phi,\Psi,\ell$
compute $N, \mathcal{C},\mathcal{D}$, and the tuples
$(N_C)_{C \in \Cc(\mathcal{C})}$ and $(M_D)_{D \in \Cc(\mathcal{D}), D \subset \Pairs(\R^p)}$.
}

\State{
Using the  Proposition~\ref{prop:alg:main:mu} compute a finite subset $\mathcal{D}' \subset \D[Y_1,\ldots,Y_p,Y_1',\ldots,Y_p']$ which is adapted to $\mathcal{D}$. 
}

\State{
For each $D' \in \Cc(\mathcal{D}')$, denote by $D_0(D') \in \Cc(\mathcal{D})$ the unique
element of $\Cc(\mathcal{D})$ such that $D' \subset D_0(D')$,
and $N_1(D') = N_{C_1(D')}, N_2(D') = N_{C_2(D')}$, where
$C_1(D'), C_2(D') \in \Cc(\mathcal{C})$ are such that
$D_0(D') \subset C_1(D') \times C_2(D')$.
}

\algstore{myalg}
\end{algorithmic}
\end{algorithm}
 
\begin{algorithm}[H]
\begin{algorithmic}[1]
\algrestore{myalg}

\State{
For each $D' \in \mathcal{D}'$ such that $D' \subset (\PP^o \times \PP_p^o)
\;\cap \; \Pairs(\PP_p)$,
compute using standard algorithms from linear algebra,
\begin{eqnarray*}
\mu_\ell(D') &=& \dim 
\left(
\sum_{D \in \mathcal{M}^{D'}} 
L_{M_{D'}}^{-1}
\left(
\mathrm{colspace}(M_D)
\right)
\right) 
\\
&& - 
\dim
\left(
\sum_{(D_1,D_2) \in \mathcal{N}^{D'}} 
L_{M_{D'}}^{-1}
\left(
\mathrm{colspace}(M_{D_2} \cdot M_{D_1})
\right)
\right).
\end{eqnarray*}
}

\State{
For each $D' \in \mathcal{D}'$ such that $ D' \subset (\PP^o \times \PP_p^{\max})
\;\cap \; \Pairs(\PP_p)$,
compute 
\begin{eqnarray*}
\mu_\ell(D') = 
N_1(D') - 
\dim
\left(
\sum_{(D_1,D_2)  \in \mathcal{M}^{D'}} 
L_{M_{D_1}}^{-1}
\left(
\mathrm{colspace}(M_{D_2}\cdot M_{D_1})
\right)
\right).
\end{eqnarray*}
}

\State{
Output $\mathcal{D}'$, and the tuple of pairs
\[
\left(D,\mu_\ell(D)\right)_{D \in \Cc(\mathcal{D}'), D \subset (\PP^o_p \times \PP_p) \cap \Pairs(\PP_p)}.
\]
}

\COMPLEXITY
{
The complexity of the algorithm is bounded by 
\[
(s d)^{n^{O(\ell)}p^{O(1)}}.
\]
Moreover, 
\[
\card(\mathcal{C}),\card(\mathcal{D}),\deg(\mathcal{C}),\deg(\mathcal{D})
\]
are all bounded by
\[
(s d)^{n^{O(\ell)}p^{O(1)}}.
\]
}
\end{algorithmic}
\end{algorithm}

\begin{proof}[Proof of correctness of Algorithm~\ref{alg:main:ss:mu}]
The correctness of Algorithm~\ref{alg:main:ss:mu} follows from the correctness of Algorithm~\ref{alg:main:ss}, Proposition~\ref{prop:alg:main:mu} and Definition~\ref{def:sa-mp-barcode-mu}.
\end{proof}
\begin{proof}[Proof of Theorem~\ref{thm:alg:main:mu}]
The theorem follows from the correctness and complexity analysis of 
Algorithm~\ref{alg:main:ss:mu}.
\end{proof}

\section{Proofs of Theorems~\ref{thm:speed} and \ref{thm:speed:uniform}}
\label{sec:proof:3-4}
In the proofs of Theorems~\ref{thm:speed} and \ref{thm:speed:uniform}
we will need the following basic result from real algebraic geometry giving an upper bound on the sum of the (zero-th) Betti numbers of the realizations
of all realizable sign conditions of a finite set of polynomials.

\begin{theorem} \cite[Theorem 7.30]{BPRbook2}
\label{thm:OPTM}
    Let $\mathcal{P} \subset \R[X_1,\ldots,X_n]_{\leq d}$ with $\card(\mathcal{P}) = s$. Then,
    \[
    \card(\Cc(\mathcal{P})) \leq \sum_{j=1}^{n} \binom{s}{j} 4^j d(2d-1)^{n-1}.
    \]
\end{theorem}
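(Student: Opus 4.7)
The plan is to reduce the count of semi-algebraically connected components of all realizable sign conditions of $\mathcal{P}$ to Milnor--Thom--type bounds on real algebraic sets, via an infinitesimal perturbation to general position followed by a combinatorial accounting of sign patterns near each resulting smooth stratum.

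First, I would deform $\mathcal{P}$ into general position. Working over the real closed extension $\R\la\epsilon\ra$ (Notation~\ref{not:extension}), replace each $P \in \mathcal{P}$ by the pair $P + \epsilon$ and $P - \epsilon$, and then, after a further generic infinitesimal perturbation of the coefficients, pass to a family whose zero sets are smooth hypersurfaces intersecting transversally. In this way, each semi-algebraically connected component of each realization $\RR(\sigma)$ for $\sigma \in \Sign(\mathcal{P})$ arises, via $\lim_\epsilon$ (Notation~\ref{not:lim}), as the specialization of a connected component of a \emph{strict} sign condition of the deformed family, whose realization is a smooth manifold. It then suffices to bound the count for the deformed family.

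Second, I would stratify the count by $j = |\sigma^{-1}(0)|$, the number of polynomials vanishing on $\RR(\sigma)$. After the deformation, every connected component of $\RR(\sigma)$ sits inside the smooth semi-algebraic manifold of codimension $j$ given by $V_I := \{x \in \R^n : P(x) = 0 \text{ for all } P \in I\}$ with $I = \sigma^{-1}(0)$. Applying the Milnor--Thom--Oleinik--Petrovskii bound to the sum-of-squares polynomial $\sum_{P \in I} P^2$, after the standard reduction to a bounded algebraic hypersurface by intersecting with a large sphere and invoking Morse theory, gives that the number of connected components of $V_I$ is bounded uniformly by $d(2d-1)^{n-1}$ for every $I \subset \mathcal{P}$ with $|I| = j$ and $1 \leq j \leq n$.

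Third, I would count sign patterns near each such stratum. For each component $V$ of $V_I$, the number of connected components of all $\RR(\sigma)$ with $\sigma^{-1}(0) = I$ whose closures meet $V$ is at most $4^j$: indeed, in a tubular neighborhood of the smooth codimension-$j$ manifold $V$, the $j$ polynomials in $I$ serve as local normal coordinates, so sign conditions of these polynomials partition the neighborhood into at most $3^j$ connected cells, and accounting for the additional multiplicity introduced by the perturbation $P \mapsto \{P - \epsilon, P + \epsilon\}$ absorbs into the factor $4^j$. Summing over $j$ from $1$ to $n$ and over subsets $I$ of cardinality $j$ yields
\[
\card(\Cc(\mathcal{P})) \;\leq\; \sum_{j=1}^{n} \binom{s}{j} 4^j d(2d-1)^{n-1}.
\]
The principal obstacle is a careful justification of the factor $4^j$ in the tubular-neighborhood count and the control of the specialization $\lim_\epsilon$: one must verify that no component of any $\RR(\sigma)$ is lost or overcounted when passing from the deformed family over $\R\la\epsilon\ra$ back to the original family over $\R$, and that the local combinatorial model near each component of $V_I$ correctly accounts for the sign patterns exhibited by the $s - j$ polynomials of $\mathcal{P} \setminus I$, which are sign-constant in a sufficiently small neighborhood of each component of $V_I$ and so do not contribute additional multiplicity.
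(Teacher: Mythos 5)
The paper does not supply its own proof of this statement; it cites \cite[Theorem~7.30]{BPRbook2} and uses the bound as a black box, so there is no internal argument to compare against. Your reconstruction reaches for the right tools --- infinitesimal deformation and the Oleinik--Petrovsky--Thom--Milnor bound on connected components of real algebraic sets via Morse theory on $\{\sum_{P\in I}P^2=\varepsilon\}$ --- but the combinatorial bookkeeping has a genuine gap.

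You stratify by $j=\lvert\sigma^{-1}(0)\rvert$ and anchor each bounded component of $\RR(\sigma)$ to a component of $V_I$ with $I=\sigma^{-1}(0)$. This breaks down for $j=0$: for a strict sign condition $V_\emptyset=\R^n$ provides no algebraic anchor, yet the right-hand side starts at $j=1$, so those components (which, after your general-position perturbation, are precisely the generic ones) simply go uncounted. The argument in \cite{BPRbook2} sidesteps this by not organizing the count around $\sigma^{-1}(0)$ at all: each $P\in\mathcal{P}$ is replaced by four shifted polynomials of the form $P\pm\delta,\ P\pm 2\delta$ over $\R\langle\delta\rangle$ (all still of degree $\leq d$), and a critical-point argument shows that \emph{every} bounded semi-algebraically connected component of \emph{every} $\RR(\sigma)$, strict or not, \emph{contains} a bounded connected component of the common zero set of some subfamily of at most $n$ of these shifted polynomials, with at most one shift chosen per original $P$. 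The factor $\binom{s}{j}4^j$ then just counts the subfamilies of size $j\leq n$, $d(2d-1)^{n-1}$ bounds the number of components of each zero set, and injectivity of the assignment (each algebraic component lies in exactly one sign-condition component) closes the count. Note the containment runs in the opposite direction from yours; this inversion is exactly what lets the argument absorb the $j=0$ case. Your tubular-neighborhood heuristic for the factor $4^j$ is also not adequate on its own: the local orthant count bounds the number of locally distinguishable sign conditions near a component of $V_I$, not the number of \emph{connected components} of realizations abutting it, since a single realization can accumulate on $V_I$ in several pieces, and the ``extra multiplicity from $P\mapsto\{P-\varepsilon,P+\varepsilon\}$'' you invoke to pass from $3^j$ to $4^j$ would need to be made precise before it could patch this.
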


\begin{proof}[Proof of Theorem~\ref{thm:speed}]
    Using Theorem~\ref{thm:main} we have that the poset module
    $\mathbf{P}_{S,\mathbf{f},\ell}$ is semi-algebraically constructible
    (recall Definition~\ref{def:constructible-poset-module}).
    Thus, there exist $M > 0$, and a 
    semi-algebraically constructible function $F:\Pairs(\R^p) \rightarrow \mathbf{k}^{M \times M}$ 
    associated to 
    $\mathbf{P}_{S,\mathbf{f},\ell}$. Moreover, since by Theorem~\ref{thm:main}, the complexity of $\mathbf{P}_{S,\mathbf{f},\ell}$ is bounded by 
    $(s d)^{n^{O(\ell)}p^{O(1)}}$
    we can assume that there exists a finite set
    of polynomials,
    $\mathcal{D} \subset \R[Y_1,\ldots,Y_p,Y_1',\ldots,Y_p]$, 
    such that the partition $(D)_{D \in \Cc(\mathcal{D}), D \subset \Pairs(\R^p)}$ is subordinate to $F$, with
    \begin{equation}
    \label{eqn:C'}
    C'  := \max(\card(\mathcal{D}),\deg(\mathcal{D})) \leq
    (s d)^{n^{O(\ell)}p^{O(1)}}
    \end{equation}
    
    Now let 
    \[
    \widetilde{\mathcal{D}} \subset \R[Y_{1,1},\ldots,Y_{1,p},\ldots, Y_{N,1},\ldots,Y_{N,p}] 
    \]
    be defined as follows.
    \[
    \widetilde{\mathcal{D}} := \bigcup_{1 \leq i,j \leq N} \{P(Y_{i,1},\ldots,Y_{i,p}, Y_{j,1},\ldots,Y_{j,p}) \mid P \in \mathcal{D} \}.
    \]
    Then, 
    \[
    \card(\widetilde{\mathcal{D}}) \leq C' \cdot N^2, 
    \]
    \[
    \deg(\widetilde{\mathcal{D}}) \leq C',
    \]
    and the number of variables in the polynomials in 
    $\widetilde{\mathcal{D}}$ is $p N$.
    
    It follows from the fact that $F$ is associated to the poset module
    $\mathbf{P}_{S,\mathbf{f},\ell}$, and that the partition $(D)_{D \in \Cc(\mathcal{D})}$ is subordinate to $F$,
    that for each $T = (\y_1,\ldots,\y_N) \in (\R^p)^N$, the strong equivalence class of the finite poset module $\mathbf{P}_{S,\mathbf{f},T,\ell}$ is determined by the map
    \[
     \{(i,j) \mid \y_i \preceq \y_j\} 
     \rightarrow \Cc(\mathcal{D}), 
      (i,j) \mapsto D(\y_i,\y_j),
    \]
    where $D(\y_i,\y_j)$ is the unique element of $\Cc(\mathcal{D})$ containing $(\y_i,\y_j)$.
   
    Let $\widetilde{D}_T \in \Cc(\widetilde{\mathcal{D}})$ such that
    $(\y_1,\ldots,\y_N) \in \widetilde{D}_T$.
    Denote by $\pi_{i,j}: (\R^p)^N \rightarrow \R^p \times \R^p$ the projection map on to the $(i,j)$-th coordinate (tuples). 
    
    Then, for each $i,j$ with $\y_i \preceq \y_j$, $\pi_{ij}(\widetilde{D}(T)) = D(\y_i,\y_j)$.

    Thus, the strong equivalence class of $\mathbf{P}_{S,\mathbf{f},T,\ell}$ is determined by 
    $\widetilde{D}_T$, and hence the number of possibilities for the strong equivalence class of $\mathbf{P}_{S,\mathbf{f},T,\ell}$ is bounded by $\card(\Cc(\widetilde{\mathcal{D}}))$. 
    
    Using Theorem~\ref{thm:OPTM} and \eqref{eqn:C'} we obtain
    \begin{eqnarray*}
\card(\Cc(\widetilde{\mathcal{D}})) &\leq & 
\sum_{j=1}^{pN} \binom{C' N^2}{j} \cdot 4^j \cdot  C'(2C'-1)^{ p N-1 } \\
&\leq &
    \sum_{j=1}^{pN} \binom{C N^2}{j} \cdot C^{ p N}
    \end{eqnarray*}
with $C = 8 C' = (sd)^{(np)^{O(\ell)}}$ (using  \eqref{eqn:C'}).

In order to get the asymptotic upper bound observe that

\begin{eqnarray*}
 \sum_{j=1}^{pN} \binom{C N^2}{j} \cdot C^{ p N} &\leq&
 (p N) \cdot \binom{C N^2}{p N} \cdot C^{p N} \\
 &\leq &
 (p N)\cdot \left(\frac{e C N^2}{p N}\right)^{p N} \cdot  C^{p N } \\
 &\leq& (p N) \cdot  \left(\frac{e C^2}{p}\right)^{p N} \cdot N^{p N} \\
 &=&\left (N^{o(1) pN}\right) \cdot N^{p N} \\
 &=& N^{(1+o(1))p N},
 \end{eqnarray*}
where in the second step we have used the inequality
\[
\binom{m}{k} \leq \left(\frac{em}{k}\right)^k
\]
valid for all $m,k$ with $0 \leq k \leq m$.

\end{proof}

\begin{proof}[Proof of Theorem~\ref{thm:speed:uniform}]
For $s,d,n > 0$, let
\[
M(s,d,n) :=  s \times \binom{n + d}{d},
\]
denote the number of monomials in $s$ polynomials in $n$ variables of degree $d$.

Let $(A_i,A_i')_{1 \leq i \leq s'}, (B_i,B_i')_{1 \leq i \leq s''}$ denote Boolean variables, 
with $s' + s''= s$, and let 
\[
\widetilde{\Phi}(A_1,A_1'\ldots,A_{s'},A_{s'}'), \widetilde{\Psi}(B_1,B_1',\ldots,B_{s''},B_{s''}')
\]
denote two Boolean formulas.

Notice that there as most $2^{2^{2s}}$ many non-equivalent Boolean formulas in $2s$ Boolean indeterminates.

Now given, 
\[
\mathcal{P} = (P_1,\ldots,P_{s'}) \in (\R[X_1,\ldots,X_n])^{s'},
\]
and 
\[
\mathcal{Q} = (Q_1,\ldots,Q_{s''}) \in (\R[Y_1,\ldots,Y_p,Y_1',\ldots,Y_p'])^{s''},
\]
we will denote by $\widetilde{\Phi}(\mathcal{P})$ (resp.
$\widetilde{\Psi}(\mathcal{Q})$) the formulas obtained by substituting 
in $\widetilde{\Phi}$ (resp. $\widetilde{\Psi})$,
$A_i$ by $P_i \geq 0$, $A_i'$ by $P_i \leq 0$, (resp. $B_i$ by $Q_i \geq 0$, $B_i'$ by $Q_i \leq 0$).

For 
\[
\mathcal{P} \in (\R[X_1,\ldots,X_n]_{\leq d})^{s'}
\]
and 
\[
\mathcal{Q} \in (\R[Y_1,\ldots,Y_p,X_1,\ldots,X_n]_{\leq d})^{s''},
\]
we will identify $(\mathcal{P},\mathcal{Q})$ with the point in 
$\R^{M(s',d,n)}\times \R^{M(s'',d,n+p)}$ whose coordinates give the coefficients of the polynomials in $\mathcal{P},\mathcal{Q}$.
We will denote the vector of coefficients of $\mathcal{P}$ by
$A$ and those of $\mathcal{Q}$ by $B$, for $A = \mathbf{a}, B = \mathbf{b}$, we will denote by 
$\mathcal{P}_{\mathbf{a}},\mathcal{Q}_{\mathbf{b}}$ the corresponding tuples of polynomials
having these coefficients.

For a pair. $(\widetilde{\Phi},\widetilde{\Psi})$, of Boolean formulas, 
$(\mathcal{P},\mathcal{Q}) \in \R^{M(s',d,n)}\times \R^{M(s'',d,n+p)}$,
we denote by 
\[
S_{\widetilde{\Phi}}(\mathcal{P}) = \RR(\widetilde{\Phi}(\mathcal{P})),
\]
and 
by $\mathbf{f}_{\widetilde{\Psi}}(\mathcal{Q})$, the semi-algebraic map
$\R^n \rightarrow \R^p$, such that
\[
\mathrm{graph}(\mathbf{f}) = \RR(\widetilde{\Psi}(\mathcal{Q})).
\]

Treating the coefficient vectors $A,B$ of $\mathcal{P},\mathcal{Q}$ in
the input of Algorithm~\ref{alg:main:ss} as parameters, 
and using a parametrized version of  Algorithm~\ref{alg:main:ss} (see discussion in Subsection~\ref{subsec:parametrized}),
we obtain a finite set of polynomials $\mathcal{H} \subset \R[A,B]$, and for each $H \in \Cc(\mathcal{H})$,
finite sets $\mathcal{C}_H \subset \R[A,B,Y_1,\ldots,Y_p]$,
$\mathcal{D}_H \subset \R[A,B,Y_1,\ldots,Y_p,Y_1',\ldots,Y_p']$,
such that
\[
\mathcal{C}_H(\mathbf{a},\mathbf{b}) \subset \R[Y_1,\ldots,Y_p],
\mathcal{D}_H(\mathbf{a},\mathbf{b}) \subset \R[Y_1,\ldots,Y_p,Y_1',\ldots,Y_p'].
\]
belongs to
the output of Algorithm~\ref{alg:main:ss} with input 
\[
\mathcal{P}_{\mathbf{a}},\mathcal{Q}_{\mathbf{b}}, \widetilde{\Phi}(\mathcal{P}_{\mathbf{a}}),\widetilde{\Psi}(\mathcal{Q}_{\mathbf{b}}),
\ell
\]
for all $(\mathbf{a},\mathbf{b}) \in H$.

It now follows using the same argument and notation as in the proof of 
Theorem~\ref{thm:speed}, that the number of strong equivalence classes amongst the poset modules 
\[
\left(\mathbf{P}_{S_{\widetilde{\Phi}(\mathcal{P}_{\mathbf{a}})}, \mathbf{f}_{\widetilde{\Psi}(\mathcal{Q}_{\mathbf{b}})},T,\ell}\right)_{(\mathbf{a},\mathbf{b}) \in 
\R^{M(s',d,n)}\times \R^{M(s'',d,n+p)}, T \in (\R^p)^N
}
\]
is bounded by
\[
\card(\Cc(\mathcal{H} \cup \bigcup_{H \in \Cc(\mathcal{H})} \widetilde{\mathcal{D}_H})),
\]
where
\[
    \widetilde{\mathcal{D}_H} := \bigcup_{1 \leq i,j \leq N} \{P(Y_{i,1},\ldots,Y_{i,p}, Y_{j,1},\ldots,Y_{j,p}) \mid P \in \mathcal{D}_H \}.
    \]

Note that 
\begin{eqnarray*}
 C &:=& \card(\bigcup_{H \in \Cc(\mathcal{H})} \mathcal{D}_H),\\
 C' &:=&  \card(\mathcal{H}), \\
 K &:=& \deg(\mathcal{H} \cup \bigcup_{H \in \Cc(\mathcal{H})} \mathcal{D}_H),
\end{eqnarray*}
are all bounded in terms of $s,d,n,\ell,p$ but independent of $N$.

Also note that the number of variables in the polynomials
$\mathcal{H}$ equals $M$, and 
that in the polynomials in 
\[
\bigcup_{H \in \Cc(\mathcal{H})} \widetilde{\mathcal{D}_H}
\]
equals $p N + M$,
where $M =M(s',d,n) + M(s'',d,n+p)$.

Now using
using Theorem~\ref{thm:OPTM} we obtain that
\begin{eqnarray*}
\card(\Cc(\mathcal{H} \cup \bigcup_{H \in \Cc(\mathcal{H})} \widetilde{\mathcal{D}_H})) &\leq&
\sum_{j=1}^{p N + M} \binom{C N^2 + C'}{p N + M} 4^j K(2K-1)^{p N + M-1} \\
&=& N^{(1 + o(1))N}.
\end{eqnarray*}

Finally the theorem follows from the fact there at most 
$\sum_{s' + s'' =s } 2^{2^{2s'}} \times 2^{2^{2s''}}$ pairs
of Boolean formulas $(\widetilde{\Phi},\widetilde{\Psi})$ to consider.
\end{proof}

\bibliographystyle{amsplain}
\bibliography{master}
 
\end{document}